\newtheorem{theorem}{Theorem}[section]
\newtheorem{corollary}[theorem]{Corollary}
\newtheorem{lemma}[theorem]{Lemma}
\newtheorem{proposition}[theorem]{Proposition}
\theoremstyle{definition}
\newtheorem{definition}[theorem]{Definition}
\theoremstyle{remark}
\newtheorem{remark}[theorem]{Remark}
\newtheorem{example}[theorem]{Example}
\newcommand{\End}{\mathrm{End}}
\newcommand{\Hom}{\mathrm{Hom}}
\newcommand{\Ext}{\mathrm{Ext}}
\newcommand{\Fac}{\mathrm{Fac}}
\newcommand{\Sub}{\mathrm{Sub}}
\newcommand{\proj}{\mathrm{proj}}
\newcommand{\T}{\mathcal{T}}
\newcommand{\F}{\mathcal{F}}
\newcommand{\C}{\mathcal{C}}
\newcommand{\B}{\mathcal{B}}
\newcommand{\X}{\mathcal{X}}
\newcommand{\U}{\mathcal{U}}
\newcommand{\f}{\mathrm{f}}
\newcommand{\modu}{\mathrm{mod}}
\newcommand{\Ker}{\mathrm{Ker}}
\newcommand{\Ima}{\mathrm{Im}}
\newcommand{\add}{\mathrm{add}}
\newcommand{\Tr}{\mathrm{Tr}}
\newcommand{\Rej}{\mathrm{Rej}}
\renewcommand{\t}{\mathrm{t}}
\renewcommand{\mod}{\mbox{mod}}
\newcommand{\rep}[1]{%
  {%
    \tiny%
    \begin{matrix}%
      #1%
    \end{matrix}%
  }%
}
\begin{document}

\title{Stratifying systems through $\tau$-tilting theory}
\author{Octavio Mendoza and Hipolito Treffinger}

\thanks{2010 {\bf{Mathematics Subject Classification}}. Primary  18G20, 18E40, 16D10. Secondary 16E99.\\
{\bf Keywords}: Stratifying systems; $\tau$-rigid modules; signed $\tau$-exceptional sequences; torsion pairs.}
\begin{abstract}
    In this paper we first show that every non-zero $\tau$-rigid $A$-module induces at least one stratifying system in the module category of $A$. 
    Moreover, we show that each of these stratifying systems can be seen as a signed $\tau$-exceptional sequence.
\end{abstract}

\maketitle


\section{Introduction}

The concept of a quasi-hereditary algebra was introduced by L. L. Scott in \cite{Scott1987} and was a key element in the characterisation of highest weight categories of finite length given by E. Cline, B. Parshall and L. L. Scott in \cite{Cline-Parshall-Scott1988}, quickly becoming a very important concept in representation theory of algebras \cite{DR2, DR1, Rin}.
One of the main features of quasi-hereditary algebras is that their homological properties are governed by the category $\F(\Delta)$ of modules filtered by a distinguished set $\Delta$ of representations known as the \textit{standard modules}.
Using the standard modules as a starting point, V. Dlab showed in \cite{Dlab} that many of the characteristics of quasi-hereditary algebras hold true for a more general class of algebras, that he called \textit{standardly stratified}.

Motivated by the homological properties of the standard modules, K. Erdmann and C. S\'aenz introduced in \cite{Erdmann2003} the notion of a stratifying system in a module category. 
They showed that one can associate to every stratifying system a module whose endomorphism algebra is standardly stratified. 
This inspired some people to start studying stratifying systems for their own sake, see for instance \cite{Marcos2006, Marcos2005a, Marcos2004, Erdmann1}.
We now recall the definition of a stratifying system as stated in \cite[Characterisation 1.6]{Marcos2004}.

\begin{definition}\label{def:stratsys}
Let $A$ be a finite dimensional $k$-algebra.
A stratifying system of size $t$ in the category $\modu\,(A)$ of finitely generated left $A$-modules is a pair $(\Theta, \leq)$ where $\Theta := \{ \Theta(i)\}_{i=1}^t$ is a family of indecomposable objects in $\modu\,(A)$ and $\leq$ is a linear order on the set $[1,t]:=\{1, \dots , t\}$ satisfying the following conditions:
\begin{itemize}
    \item[(SS1)] $\Hom_A(\Theta(j), \Theta(i))=0$ if $j > i$;
    \item[(SS2)] $\Ext^1_A(\Theta(i), \Theta(j))=0$ if $j \geq i$.
\end{itemize}
\end{definition}

The literature on stratifying systems shows that the existence of a stratifying system in the module category $\modu\,(A)$ of an algebra $A$ gives lots of information about the homological properties of $\modu\,(A)$ \cite{Erdmann1, HLM, Marcos2006, MS2006}.
However for a given algebra, little is known about the existence of a non-trivial stratifying system in its module category, with the exception of hereditary algebras \cite{Cadavid1, Cadavid2}. 

In this paper we give a constructive proof of the existence of non-trivial stratifying systems in the module category of any algebra using the $\tau$-tilting theory introduced by T. Adachi, O. Iyama and I. Reiten in \cite{AIR}. 
More precisely, we show that the construction of the standard modules $\Delta_A$ from the algebra $_AA$ as a module over itself can be generalised to every $\tau$-rigid module $M$, giving rise to a stratifying system that we denote $\Delta_M$. 
Our first main result can be summarised as follows (see Theorem \ref{thm:taustratsys},  Corollary \ref{CorApplT1} and Corollary \ref{CorApplT2}).

\begin{theorem}
Let $A$ be a finite dimensional $k$-algebra and let $M$ be basic non-zero $\tau$-rigid $A$-module. 
Then the following statements hold true.
\begin{itemize}
\item[(a)] There is at least one stratifying system $\Delta_M$ induced by $M$ whose size coincides with the number of pairwise non-isomorphic indecomposable direct summands of $M$.
\item[(b)] The stratifying system $\Delta_M$ can be completed to a stratifying system of size $n$, where $n$ is the number of isomorphism classes of simple $A$-modules.
\item[(c)] The smallest torsion class in $\modu\,(A)$ containing $\Delta_M$ is $\Fac(M)$.
\item[(d)] If $M$ has a $\Delta_M$-filtration, then $\Lambda:=\End_A(M)^{op}$ is a basic standardly stratified algebra.
Moreover, the functor $\Hom_A(M,-):\F(\Delta_M)\to \F({}_\Lambda\Delta)$ is an equivalence of categories with quasi-inverse $M\otimes_\Lambda-:\F({}_\Lambda\Delta)\to\F(\Delta_M).$
\end{itemize}
\end{theorem}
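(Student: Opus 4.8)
The plan is to carry out, for a basic non-zero $\tau$-rigid module $M=\bigoplus_{i=1}^{t}M_i$ (with the $M_i$ pairwise non-isomorphic and indecomposable), the same construction that produces the standard modules $\Delta_A$ out of ${}_AA$. The first input is that, by $\tau$-tilting theory \cite{AIR}, $\Fac M$ is a functorially finite torsion class in which each $M_i$ is $\Ext$-projective: the Auslander--Reiten formula turns $\Hom_A(M,\tau M)=0$ into $\Ext^1_A(M,\Fac M)=0$, whence $\Fac M$ is closed under extensions, and $M$ is a direct summand of its basic $\Ext$-projective generator. Next I would fix a linear order on $\{M_1,\dots,M_t\}$ that is \emph{compatible} with the lattice of torsion classes, meaning $\Fac(M_i\oplus\cdots\oplus M_t)\supsetneq\Fac(M_{i+1}\oplus\cdots\oplus M_t)$ for all $i$; its existence is a preliminary lemma, proved by repeatedly removing from a non-zero $\tau$-rigid module an indecomposable summand that is not generated by the remaining ones. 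Writing $M_{>i}:=\bigoplus_{j>i}M_j$ and $(\Fac M_{>i},\F_{>i})$ for the torsion pair generated by $M_{>i}$, I then set
\[
\Delta_M(i):=M_i/t_{>i}(M_i),
\]
the maximal quotient of $M_i$ lying in $\F_{>i}$, equivalently $M_i$ modulo the trace of $M_{>i}$ in it; when $M={}_AA$ this recovers the usual $\Delta_A(i)$. A short argument using the compatibility of the order and the relative $\Ext$-projectivity of $M_i$ shows each $\Delta_M(i)$ is indecomposable, so that $\Delta_M:=\{\Delta_M(i)\}_{i=1}^{t}$ with this order is the candidate for (a).

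Condition (SS1) is then immediate: for $j>i$ the module $\Delta_M(j)$ is a quotient of $M_j$, hence lies in $\Fac M_{>i}$, while $\Delta_M(i)\in\F_{>i}$, so $\Hom_A(\Delta_M(j),\Delta_M(i))=0$. Condition (SS2) is the heart of the argument. Given $j\ge i$, applying $\Hom_A(-,\Delta_M(j))$ to $0\to t_{>i}(M_i)\to M_i\to\Delta_M(i)\to 0$ and using $\Ext^1_A(M_i,\Delta_M(j))=0$ (valid since $\Delta_M(j)\in\Fac M$ and $M$ is $\tau$-rigid), one identifies $\Ext^1_A(\Delta_M(i),\Delta_M(j))$ with the cokernel of the restriction map $\Hom_A(M_i,\Delta_M(j))\to\Hom_A(t_{>i}(M_i),\Delta_M(j))$. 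For $j=i$ this cokernel vanishes because $t_{>i}(M_i)$ is torsion and $\Delta_M(i)$ is torsion-free. The remaining case $j>i$ --- showing that every morphism $t_{>i}(M_i)\to\Delta_M(j)$ extends along the inclusion $t_{>i}(M_i)\hookrightarrow M_i$ --- is the main obstacle. I expect to handle it by descending induction on $i$, exploiting that $t_{>i}(M_i)$ and $\Delta_M(j)$ both lie in $\Fac M_{>i}$, that $M_{>i}$ is $\Ext$-projective there, and a lemma relating an $\add(M_{>i})$-approximation of $t_{>i}(M_i)$ to its embedding into $M_i$; this is precisely the place where the compatibility of the order with the chain of torsion classes --- equivalently, in the language of signed $\tau$-exceptional sequences, the fact that $M_i$ is $\tau$-rigid inside the $\tau$-perpendicular category of $M_{>i}$ --- enters essentially.

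Part (c) should then be a short induction on the sequences $0\to t_{>i}(M_i)\to M_i\to\Delta_M(i)\to 0$: each $\Delta_M(i)$ is a quotient of $M_i$, so the torsion class generated by $\Delta_M$ is contained in $\Fac M$; conversely, running $i$ from $t$ downwards, any torsion class containing $\Delta_M(i+1),\dots,\Delta_M(t)$ contains $\Fac M_{>i}$, hence $t_{>i}(M_i)$, hence --- being extension-closed --- $M_i$, so in the end it contains $M$ and therefore $\Fac M$. For (b) I would invoke the Bongartz completion of $\tau$-tilting theory \cite{AIR}: $M$ is a direct summand of a $\tau$-tilting module $T$, which has exactly $n$ indecomposable summands. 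Choosing a compatible order on the summands of $T$ whose last $t$ terms are those of $M$, one checks that for each of those indices $T_{>i}$ coincides with $M_{>i}$, so the restriction of the stratifying system $\Delta_T$ produced by (a) to those indices is itself a stratifying system induced by $M$ as in (a); hence $\Delta_T$ is a completion of $\Delta_M$ to size $n$, and the only point requiring care is the existence of such an order on $T$.

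Finally, for (d), the hypothesis that $M$ has a $\Delta_M$-filtration, combined with (c) --- which places $\F(\Delta_M)$ inside $\Fac M$ --- shows, using the usual reordering properties of $\Delta_M$-filtrations, that each $M_i$ is the $\F(\Delta_M)$-projective cover of $\Delta_M(i)$; hence $M$ is exactly the basic $\Ext$-projective generator of $\F(\Delta_M)$. The theorem of Erdmann--S\'aenz \cite{Erdmann2003} then yields that $\Lambda=\End_A(M)^{op}$ is basic standardly stratified, with standard modules ${}_\Lambda\Delta(i)=\Hom_A(M,\Delta_M(i))$. The claimed equivalence is a Brenner--Butler-type statement: $\Hom_A(M,-)$ is exact on $\F(\Delta_M)$ because $M$ is $\Ext$-projective there, $M\otimes_\Lambda-$ is exact on $\F({}_\Lambda\Delta)$ by the analogous vanishing of Tor (which it suffices to verify on the ${}_\Lambda\Delta(i)$), and one checks that the unit and counit of the adjunction are isomorphisms on the generators $\Delta_M(i)$ and ${}_\Lambda\Delta(i)$ respectively and then propagates this along the filtrations by the five lemma. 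Throughout, the single genuinely delicate step is the case $j>i$ of (SS2) in part (a); the rest is bookkeeping once the compatible order and the torsion-theoretic dictionary are in place.
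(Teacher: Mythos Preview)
Your overall plan is the paper's: choose a TF-admissible order, set $\Delta_M(i)=M_i/\t_{>i}(M_i)$, verify (SS1)--(SS2), then deduce (b) from the Bongartz completion, (c) from torsion bookkeeping, and (d) from the Erdmann--S\'aenz / Marcos--Mendoza--S\'aenz machinery. But your treatment of (SS2) contains a genuine error of direction, not of difficulty.

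You aim to show $\Ext^1_A(\Delta_M(i),\Delta_M(j))=0$ for $j\ge i$ and flag the case $j>i$ as ``the single genuinely delicate step.'' That vanishing is \emph{false} in general. Take $A=kQ$ with $Q\colon 2\to 1$ and $M={}_AA$; the only TF-admissible order is $M_1=P(2)$, $M_2=P(1)=S(1)$, giving $\Delta_M(1)=S(2)$ and $\Delta_M(2)=S(1)$, and then $\Ext^1_A(\Delta_M(1),\Delta_M(2))=\Ext^1_A(S(2),S(1))\neq 0$. The correct (SS2) --- the one consistent with the EPSS axioms in Definition~\ref{def:epss} and the one the paper actually proves in Theorem~\ref{thm:taustratsys} --- is $\Ext^1_A(\Delta_M(i),\Delta_M(j))=0$ for $i\ge j$; the inequality in Definition~\ref{def:stratsys} is misprinted. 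With the indices righted, the proof is immediate and identical to your $j=i$ case: for $i\ge j$ one has $\t_{>i}(M_i)\in\Fac(M_{>i})\subseteq\Fac(M_{>j})$ while $\Delta_M(j)\in M_{>j}^\perp$, so $\Hom_A(\t_{>i}(M_i),\Delta_M(j))=0$, and the long exact sequence gives $\Ext^1_A(\Delta_M(i),\Delta_M(j))=0$. No descending induction, approximation lemma, or passage to $\tau$-perpendicular categories is needed; the obstacle you feared is a phantom.

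The remainder is fine. Your downward induction for (c) --- showing that any torsion class containing $\Delta_M$ contains each $M_i$, hence $\Fac(M)$ --- is correct and in fact a shade more direct than the paper's route through \cite[Lemma~3.7]{DIJ} and the auxiliary torsion pairs $(\Fac(M'_i),{M'_i}^\perp)$ with $M'_i=\bigoplus_{j\neq i}M_j$. For (b), the paper proves something slightly stronger than you need: placing the Bongartz-complement summands in \emph{any} order before the given TF-admissible order on $M$ already yields a TF-admissible decomposition of $\B_M$, because no complement summand $X$ can lie in $\Fac(\B_M/X)$ (else mutation at $X$ would contradict $\Fac(\B_M)={}^\perp(\tau M)$). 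Your sketch of (d) matches Corollary~\ref{CorApplT2}, which identifies $(\Delta_M,\{M_i\},\le)$ with the Ext-projective stratifying system attached to $(\Delta_M,\le)$ and then invokes \cite{Marcos2005a}.
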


\medskip
The theory of highest weight categories and quasi-hereditary algebras inspired other important concepts in representation theory, including the notion of an exceptional sequence introduced by W. Crawley-Boevey in \cite{Crawley-Boevey1992}.
Recently, K. Igusa and G. Todorov defined in \cite{IgusaTodorov2017} the notion of signed exceptional sequences, a generalisation of exceptional sequences in the module category of a hereditary algebra, to tackle a problem arising in the theory of cluster algebras.

One of the main features of $\tau$-tilting theory is that it captures some of the combinatorial properties of cluster algebras in the module category of any finite dimensional algebra. 
Building on these ideas, A. B. Buan and R. Marsh introduced the signed $\tau$-exceptional sequences in \cite{BM} as a generalisation of signed exceptional sequences for every algebra. 

In the second part of the paper, we study the relationship between signed $\tau$-exceptional sequences and stratifying systems induced by $\tau$-rigid modules.
Our result can be summarised as follows. 
For the complete version, see Theorem \ref{thm:ss-are-taues}.  

\begin{theorem}
Let $A$ be a finite dimensional $k$-algebra and let $M$ be a non-zero $\tau$-rigid $A$-module. 
Then every stratifying system $\Delta_M$ associated to $M$ induces a signed $\tau$-exceptional sequence.
\end{theorem}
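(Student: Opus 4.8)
The plan is to prove the statement by induction on the size $t$ of the stratifying system $\Delta_M$, that is, on the number of pairwise non-isomorphic indecomposable direct summands of $M$, by recognising that the recursive construction underlying Theorem~\ref{thm:taustratsys} is precisely the recursion that defines a signed $\tau$-exceptional sequence in the sense of Buan--Marsh \cite{BM}, in which one successively removes the last entry and passes to its $\tau$-perpendicular category $J(-)$ (in the sense of Jasso). Concretely, write $M=\bigoplus_{i=1}^{t}M_i$ with the $M_i$ indecomposable and indexed compatibly with the order $\leq$, so that $\Theta(t)$ is the $\leq$-maximal standard module; the construction gives $\Theta(t)=M_t$, and for $i<t$ the module $\Theta(i)$ arises by running the construction on the $\tau$-tilting reduction $\overline{M}$ of $M_1\oplus\dots\oplus M_{t-1}$ inside $J(M_t)\simeq\modu\,(B)$, for an algebra $B$ with one fewer isomorphism class of simple modules than $A$. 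The base case $t=1$ is immediate, since a single indecomposable $\tau$-rigid module is by definition a signed $\tau$-exceptional sequence of length one.

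For the inductive step I would verify three points. First, $\Theta(t)=M_t$ is an indecomposable $\tau$-rigid $A$-module, hence a positive signed $\tau$-rigid object; no negative (shifted-projective) terms can occur, because $M$ is an honest module and $\tau$-tilting reduction carries honest modules to honest modules. Second, the remaining modules lie in $J(\Theta(t))=J(M_t)$: the vanishing $\Hom_A(M_t,\Theta(i))=0$ is exactly condition (SS1), while $\Hom_A(\Theta(i),\tau M_t)=0$ follows from $\Theta(i)\in\Fac(M)$ together with the inclusion $\Fac(M)\subseteq{}^{\perp}(\tau M)$, which holds because $\Hom_A(M,\tau M)=0$ and ${}^{\perp}(\tau M)$ is a torsion class; hence $\Theta(i)$ belongs to the Bongartz torsion class ${}^{\perp}(\tau M_t)$ of $M_t$, and (SS1) places it in $M_t^{\perp}$ as well. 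Third, and crucially, under the equivalence $J(M_t)\simeq\modu\,(B)$ the pair $\big(\{\Theta(i)\}_{i=1}^{t-1},\leq\big)$ corresponds to a stratifying system of the form $\Delta_{\overline{M}}$ for the reduced $\tau$-rigid $B$-module $\overline{M}$. Granting this, the induction hypothesis over $B$ says $(\Theta(1),\dots,\Theta(t-1))$ is a signed $\tau$-exceptional sequence in $J(M_t)$, and then the recursive definition of signed $\tau$-exceptional sequences gives that $(\Theta(1),\dots,\Theta(t-1),\Theta(t))$ is one in $\modu\,(A)$. No reordering is needed: the entry removed first in the recursion is the $\leq$-maximal $\Theta(t)$, in agreement with the convention of \cite{BM}, and (SS1) is exactly what makes the remaining terms sit inside $J(\Theta(t))$.

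The main obstacle is the third point: it is the assertion that the assignment $M\mapsto\Delta_M$ commutes with $\tau$-tilting reduction, i.e.\ that reducing $M_1\oplus\dots\oplus M_{t-1}$ into $J(M_t)$ and then building the stratifying system over $B$ produces the same family as building $\Delta_M$ over $A$ and then restricting. Since the reduction equivalence $J(M_t)\simeq\modu\,(B)$ is exact and fully faithful, it preserves all the relevant $\Hom$ and $\Ext^1$ groups, so the axioms (SS1) and (SS2) transport for free; the genuine content is to match the \emph{recipe} producing the $\Theta(i)$ (successive passage to quotients and iterated reduction) with Jasso's reduction functor. This compatibility is the technical heart of the argument and should be read off directly from the construction used to prove Theorem~\ref{thm:taustratsys}; once it is in place, the statement follows formally from the induction above.
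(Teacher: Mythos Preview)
Your inductive strategy matches the paper's: peel off $\Delta_M(t)=M_t$, place the remaining $\Delta_M(i)$ inside $J(M_t)$, and identify them with the stratifying system of the reduced $\tau$-rigid object $\overline{M}=\bigoplus_{i<t}\f_t(M_i)$ there. Your verification of the second point via (SS1) and the inclusion $\Fac(M)\subseteq{}^\perp(\tau M)\subseteq{}^\perp(\tau M_t)$ is correct and slightly more direct than the paper's route through \cite[Proposition~3.15]{Jasso2015}.

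The gap is exactly where you place it, but your expectation that the compatibility ``should be read off directly from the construction used to prove Theorem~\ref{thm:taustratsys}'' is mistaken. That proof never invokes $\tau$-tilting reduction; it checks (SS1) and (SS2) by direct torsion-pair computations in $\modu\,(A)$, so the identification $\Delta_M(i)\cong\Delta_{\overline{M}}(i)$ for $i<t$ is not visible there. The paper supplies the missing piece as a separate lemma (Lemma~\ref{lem:comptorsfreefunct}): for $X\in J(M_t)$, the canonical short exact sequence of $X$ with respect to the torsion pair generated by $M_t\oplus M'$ in $\modu\,(A)$ agrees with the one for the reduced torsion pair $(\Fac(\f_t(M')),\f_t(M')^\perp)$ in $J(M_t)$. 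Applied iteratively, this forces the global torsion-free functors $\f_{i+1}$ defining $\Delta_M$ to coincide with the composite of the local torsion-free functors along the chain of nested Jasso subcategories $J(\U_k)$, which is precisely the commutation you need. The argument is short --- both candidate torsion subobjects are maximal submodules of $X$ lying in $\Fac(M_t\oplus M')\cap M_t^\perp$, and Lemma~\ref{lem:redtors} identifies this intersection with $\Fac(\f_t(M'))$ --- but it is genuine additional content not contained in Theorem~\ref{thm:taustratsys}, and you would need to state and prove it.
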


\medskip
The structure of the article is the following. 
In Section~\ref{sec:Background1}, we give the necessary background to show Theorem~1.2, and we prove it in Section~\ref{sec:Main}. 
In Section~\ref{sec:Background2}, we recall the definition of $\tau$-exceptional sequences as given in \cite{BM}.
Section~\ref{sec:comparison} is dedicated to prove Theorem~1.3.
Finally, in Section~\ref{sec:example} we study a specific example in detail. 

\medskip
{\bf Acknowledgements} We thank the anonymous referee, whose remarks have improved the readability and quality of this paper. 
The second author is grateful to Aran Tattar, Sibylle Schroll and Eduardo Marcos for the fruitful discussions. 
The authors thanks the Projects PAPIIT-Universidad Nacional Aut\'onoma de M\'exico IN103317 and IN103317. 
The second author was supported by the EPSRC funded project EP/P016294/1.

\section{Setting and background}\label{sec:Background1}
Throughout this paper, the term algebra means a non-zero finite dimensional $k$-algebra over a field $k$.
For a given algebra $A$ we denote by $\modu\,(A)$ the category of finitely generated left $A$-modules and by $\proj\,(A)$ the finitely generated projective left $A$-modules. 
The number of pairwise non-isomorphic indecomposable direct summands of $M$ is denoted by $rk(M)$ and we say that $rk(M)$ is the rank of $M$ for all $M \in \mod\,(A)$.
The Auslander-Reiten translation in $\modu\,(A)$ is denoted by $\tau$ and $D$ stands for the classical duality functor.

For a given $M\in\modu\,(A),$ we define the class $\Fac(M)$ as
$$\Fac(M):=\{X \in\modu\,(A)\; :\;\exists\,\text{ an epimorphism } M^{n} \to X \text{ for some } n\in\mathbb{N}\}.$$
Similarly, we the class $\Sub(M)$ is
$$\Sub(M):=\{X \in\modu\,(A)\; :\;\exists\,\text{ a monomorphism }  X \to M^{n} \text{ for some } n\in\mathbb{N}\}.$$
Moreover, the right perpendicular category $M^\perp$ of $M$ is 
\begin{center}
 $M^\perp:=\{X\in\modu\,(A)\;:\;\Hom_A(M, X))=0\}.$   
\end{center}
The left perpendicular category ${}^\perp M$ of $M$ is defined dually.

Let $\X$ be a full subcategory  of $\mod\,(A)$.
We say that an $A$-module $M\in\modu\,(A)$ admits an {\it $\X$-filtration} if there is a chain of submodules $0=M_0\subseteq M_1\subseteq\ldots\subseteq M_{t-1}\subseteq M_t=M$ such that each $M_i/M_{i-1}$ is isomorphic to an object in $\X$. 
The class of all $N\in\modu\,(A)$ which admits an $\X$-filtration is denoted by $\F(\X)$.
\medskip

{\sc Torsion pairs.}
A torsion pair $(\T, \F)$ in $\modu\,(A)$ is a pair $(\T, \F)$ of full subcategories of $\modu\,(A)$ such that $\T={}^\perp\F$ and $\F=\T^\perp.$
If $(\T, \F)$ is a torsion pair in $\modu\,(A),$ we say that $\T$ is a torsion class and $\F$ is a torsion free class. 
Moreover it is well-known that $\T$ is closed under quotients and extensions, while $\F$ is closed under submodules and extensions.
If $\T$ is a subcategory of $\modu\,(A)$ closed under quotients and extensions, then there exists a unique full subcategory $\mathcal{F}$ of $\modu\,(A)$ such that $(\mathcal{T}, \mathcal{F})$ is a torsion pair.
In particular, if $\mathcal{T}=\Fac(M)$ for some $M\in\modu\,(A),$ then $\mathcal{F} = M^{\perp}$.

Let $(\T, \F)$ be a torsion pair in $\mod(A)$.
Then for every $M \in \modu\,(A),$ there is a short exact sequence 
$0\to \t(M)\to M\to \f(M)\to 0,$
where $\t(M)\in\mathcal{T}$ and $\f(M)\in\mathcal{F}$, which is unique up to isomorphism.
This short exact sequence is known as the canonical short exact sequence of $M$ with respect to $(\T, \F)$.
Moreover, the natural application $\t(-): \modu\,(A) \to \modu\,(A)$ is a subfunctor of the identity functor $1_{\modu\,(A)}$  and $\f(-): \modu\,(A)  \to \modu\,(A) $ is a functor which is naturally isomorphic to $1_{\modu\,(A)}/ t.$
\medskip

{\sc Approximations.} 
Let $A$ be an algebra, $\X\subseteq\modu\,(A)$ and $M\in\modu\,(A)$.
A morphism $f:X\to M$ in $\modu\,(A)$ is a right $\X$-approximation if $X\in\X$ and $\Hom_A(X',f):\Hom_A(X',X)\to\Hom_A(X',M)$ is surjective for any $X'\in\X$.
Moreover, if $f:X\to M$ is a right $\X$-approximation and the equality $fh=f$ implies that $h$ is an automorphism, we say that $f$ is a $\X$-cover. 
The subcategory $\X$ of $\mod\,(A)$ is contravariantly finite if any $N\in\modu\,(A)$ admits a right $\X$-approximation. 
Dually, we define left $\X$-approximations and covariantly finite classes. 
It is said that $\X$ is functorially finite if it is both contravariantly and covariantly finite.
\medskip

{\sc $\tau$-tilting theory.}
In this paper we study stratifying systems via $\tau$-tilting theory, introduced by T. Adachi, O. Iyama and I. Reiten in \cite{AIR}.
We now give a brief summary of the definitions and results needed for the first part of the paper. 

\begin{definition}\cite[Definition 0.1]{AIR}
Let $A$ be an algebra.
An $A$-module $M\in\modu\,(A)$ is \textit{$\tau$-rigid} if $\Hom_A(M, \tau M)=0$. 
A $\tau$-rigid module $M$ is \textit{$\tau$-tilting} if $rk(M)=rk(A)$.
Finally, a $\tau$-rigid module $M$ is \textit{support $\tau$-tilting} if there exists an idempotent $e \in A$ such that $M$ is $\tau$-tilting as a $(A/AeA)$-module. 
\end{definition}

Let $\X$ be a full subcategory of $\modu\,(A)$ and $X \in \X$.
We say that $X\in\X$ is {\it Ext-projective} in $\X$ if $\Ext^1_A(X,Y)=0$ for all $Y \in \X$. 
Moreover, if $\X$ is a functorially finite torsion class, there are only finitely many pairwise non-isomorphic indecomposable Ext-projective modules in $\X$.
Hence the category of all the Ext-projective modules in $X$ is a full additive subcategory of $\modu\,(A)$ which is  generated by a basic 
$A$-module  denoted by $\mathbb{P}(\X).$ One of the main features of $\tau$-tilting theory is that all functorially finite torsion classes in 
$\modu\,(A)$ can be described by using support $\tau$-tilting modules, as shown in \cite{AIR, Auslander1981}.

\begin{theorem}\label{thm:fftors} Let $A$ be an algebra.
If $M\in\modu\,(A)$ is a $\tau$-rigid module, then $\Fac(M)$ is a functorially finite torsion class. 
Moreover, if $\T$ is a functorially finite torsion class in $\modu\,(A),$ then there exists a support $\tau$-tilting module $T$ such that $\T = \Fac(T)$.
\end{theorem}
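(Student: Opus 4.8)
The statement has two parts, which I would treat in turn.

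\emph{First part: $\Fac M$ is a functorially finite torsion class.} Closure of $\Fac M$ under quotients and finite direct sums is immediate, so by the criterion recalled in Section~\ref{sec:Background1} it is enough to establish closure under extensions --- which then produces the torsion pair $(\Fac M, M^{\perp})$ --- together with functorial finiteness. For extension-closure I would first show that $M$ is $\Ext$-projective in $\Fac M$: the Auslander--Reiten formula gives $D\Ext^1_A(M,X)\cong\overline{\Hom}_A(X,\tau M)$, and if $X\in\Fac M$ then any morphism $X\to\tau M$ becomes zero after precomposing with an epimorphism $M^{n}\twoheadrightarrow X$, since $\Hom_A(M,\tau M)=0$; hence $\Hom_A(X,\tau M)=0$ and $\Ext^1_A(M,X)=0$. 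Given $0\to X\to Y\to Z\to 0$ with $X,Z\in\Fac M$, an epimorphism $M^{k}\twoheadrightarrow Z$ then lifts along $Y\to Z$, and combining it with $M^{l}\twoheadrightarrow X\hookrightarrow Y$ yields an epimorphism $M^{k+l}\twoheadrightarrow Y$; thus $Y\in\Fac M$. For functorial finiteness, contravariant finiteness is clean: the trace submodule $\mathrm{tr}_M(N)\subseteq N$ (the sum of the images of all morphisms $M\to N$) lies in $\Fac M$, and $\mathrm{tr}_M(N)\hookrightarrow N$ is a right $\Fac M$-approximation. Covariant finiteness is the subtler point; I would deduce it by dualising through $D$ --- so that it becomes contravariant finiteness of $\Sub(DM)$ in $\modu\,(A^{op})$ --- and invoking \cite{Auslander1981}, or alternatively by first completing $M$ to a $\tau$-tilting module and transporting functorial finiteness from there.

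\emph{Second part: every functorially finite torsion class $\T$ is $\Fac T$ for a support $\tau$-tilting $T$.} Section~\ref{sec:Background1} recalls that $\T$ has a basic $\Ext$-projective generator $\mathbb{P}(\T)$; by \cite{Auslander1981} one moreover has $\T=\Fac\,\mathbb{P}(\T)$, and I would set $T:=\mathbb{P}(\T)$. That $T$ is $\tau$-rigid reverses the computation above: since $\Ext^1_A(T,\Fac T)=0$, the Auslander--Reiten formula gives $\overline{\Hom}_A(Y,\tau T)=0$ for every $Y\in\Fac T$, and because $\tau T$ has no injective direct summand this upgrades $\overline{\Hom}_A(T,\tau T)=0$ to $\Hom_A(T,\tau T)=0$. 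It then remains to exhibit an idempotent $e\in A$ making $T$ a $\tau$-tilting $\bar A$-module for $\bar A:=A/AeA$: one chooses $e$ so that the indecomposable projectives annihilated by $e$ are exactly those lying in $\T^{\perp}$, checks that $T\in\modu\,(\bar A)$ and remains $\tau$-rigid there, and finally proves the rank equality $rk(T)=rk(\bar A)$, i.e.\ that $\T$ contains exactly $rk(\bar A)$ non-isomorphic indecomposable $\Ext$-projectives.

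The genuine obstacle in both halves is a ``completeness'' statement: the covariant finiteness in the first part and --- much more substantially --- the rank equality in the second, which is the $\tau$-tilting analogue of Bongartz's completion theorem. I would settle the latter by a Bongartz-type completion of $T$ together with the dual bookkeeping for torsion-free classes and $\Ext$-injectives; everything else (torsion closure, contravariant finiteness, and the $\tau$-rigidity of $\mathbb{P}(\T)$) is routine once the Auslander--Reiten formula is available. Since the whole statement is precisely what is established in \cite{AIR, Auslander1981}, in the paper I would simply cite these references.
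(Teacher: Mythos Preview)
Your final sentence is exactly right: the paper does not prove Theorem~\ref{thm:fftors} at all --- it is stated as background with the citation \cite{AIR, Auslander1981} and used as a black box throughout. So ``simply cite these references'' is precisely what the paper does, and your proposal matches.

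Your sketch of what lies behind the citation is essentially the standard Auslander--Smal{\o}/Adachi--Iyama--Reiten argument, but one step is not correct as written. In the second part you argue that $\overline{\Hom}_A(T,\tau T)=0$ upgrades to $\Hom_A(T,\tau T)=0$ ``because $\tau T$ has no injective direct summand''. That implication is false in general: a nonzero morphism $X\to Y$ can factor through an injective even when $Y$ has no injective summand (e.g.\ over the path algebra of $1\to 2\to 3$, the surjection $I_2\twoheadrightarrow S_2$ factors through the injective $I_2$, yet $S_2$ is not injective). The actual argument in \cite{Auslander1981} (and recalled as \cite[Proposition~2.4]{AIR}) goes through $\Fac T$ rather than $T$ alone: from $\Ext^1_A(T,\Fac T)=0$ one gets $\overline{\Hom}_A(N,\tau T)=0$ for \emph{every} $N\in\Fac T$, and applying this to $N=\Ima f$ for an arbitrary $f\colon T\to\tau T$ forces the inclusion $\Ima f\hookrightarrow\tau T$ to factor through an injective envelope, which --- combined with the minimal projective presentation defining $\tau T$ --- yields $f=0$. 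So your conclusion is right but the one-line justification should be replaced by a reference to Auslander--Smal{\o}. The remaining points you flag (covariant finiteness via \cite{Auslander1981}, and the Bongartz-type rank count) are indeed the substantive content of the cited papers.
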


An important property of $\tau$-rigid modules is that they can always be completed to a $\tau$-tilting module, as shown in \cite[Theorem 2.10]{AIR}.

\begin{theorem}\label{prop:Bongartz}
Let $A$ be an algebra and $M\in\modu\,(A)$ be a $\tau$-rigid module. 
Then ${}^\perp(\tau M)$ is a torsion class and $\B_M:=\mathbb{P}(^\perp(\tau M))$ is a $\tau$-tilting module having $M$ as a direct summand.
We say that $\B_M$ is the \textit{Bongartz completion} of $M$.
\end{theorem}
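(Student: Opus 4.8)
The plan is to follow the proof of the Bongartz completion in \cite[Theorem 2.10]{AIR}, organised in four steps. First I would check that ${}^\perp(\tau M)$ is a torsion class. As $\Hom_A(-,\tau M)$ is left exact, a surjection $X\twoheadrightarrow Z$ with $X\in{}^\perp(\tau M)$ induces a monomorphism $\Hom_A(Z,\tau M)\hookrightarrow\Hom_A(X,\tau M)=0$, so ${}^\perp(\tau M)$ is closed under quotients; and for a short exact sequence $0\to X\to Y\to Z\to 0$ with $X,Z\in{}^\perp(\tau M)$, left exactness forces $\Hom_A(Y,\tau M)=0$, so ${}^\perp(\tau M)$ is closed under extensions. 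By the characterisation of torsion classes recalled just before the statement, ${}^\perp(\tau M)$ is then a torsion class. Note that $M\in{}^\perp(\tau M)$ because $M$ is $\tau$-rigid, and in fact $\Fac(M)\subseteq{}^\perp(\tau M)$ by the same left-exactness argument applied to quotients of $M^{n}$.

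Second, I would show that $M$ is Ext-projective in ${}^\perp(\tau M)$. By the Auslander--Reiten formula one has $D\Ext^1_A(M,Y)\cong\overline{\Hom}_A(Y,\tau M)$, and the right-hand side is a quotient of $\Hom_A(Y,\tau M)$, which vanishes when $Y\in{}^\perp(\tau M)$; hence $\Ext^1_A(M,Y)=0$ for all such $Y$. Thus, once $\B_M=\mathbb{P}({}^\perp(\tau M))$ is known to be defined, $M$ lies in $\add(\B_M)$, and $M\mid\B_M$ since $\B_M$ is basic.

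Third --- and this is the technical heart --- I would prove that ${}^\perp(\tau M)$ is functorially finite. Contravariant finiteness is automatic for any torsion class: the inclusion of the torsion submodule of $X$ (with respect to the torsion pair from the first step) is a right ${}^\perp(\tau M)$-approximation, because images of maps out of objects of ${}^\perp(\tau M)$ are quotients and hence again lie in ${}^\perp(\tau M)$. The point is covariant finiteness. Here I would fix a minimal projective presentation $P_1\xrightarrow{p}P_0\to M\to 0$; a standard computation with it identifies ${}^\perp(\tau M)$ with $\{X\in\modu\,(A)\;:\;\Hom_A(p,X)\text{ is surjective}\}$, and from this description one builds, for each $X$, a left ${}^\perp(\tau M)$-approximation of $X$ by a universal pushout along copies of $p$ --- the $\tau$-tilting analogue of Bongartz's universal extension. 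Granting this, Theorem \ref{thm:fftors} provides a support $\tau$-tilting module $T$ with ${}^\perp(\tau M)=\Fac(T)$, and since the Ext-projective objects of $\Fac(T)$ for a support $\tau$-tilting $T$ are precisely those in $\add(T)$, we get $\B_M=T$; together with the second step this shows $M\mid\B_M$.

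Finally I would check that $rk(\B_M)=rk(A)$, i.e. that $\B_M$ is $\tau$-tilting and not merely support $\tau$-tilting. Under the correspondence of Theorem \ref{thm:fftors} this amounts to showing there is no indecomposable projective $P_i$ with $\Hom_A(P_i,{}^\perp(\tau M))=0$; one obtains this by showing that every simple occurs as a composition factor of some object of ${}^\perp(\tau M)$, for instance by applying the approximation constructed above to the regular module $A$. This also explains the terminology: $\B_M$ is the $\tau$-tilting module attached to the largest torsion class in which $M$ is Ext-projective. The main obstacle is the third step together with this last rank count --- producing the left approximations and controlling the number of indecomposable summands --- since everything else is formal manipulation of the torsion pair and the Auslander--Reiten formula; this is exactly where the projective-presentation techniques of \cite{AIR} are indispensable.
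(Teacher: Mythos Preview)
This statement is not proved in the paper: it is quoted as background from \cite[Theorem~2.10]{AIR}, with no argument supplied. So there is nothing in the paper to compare your proposal against beyond the citation itself.

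Your outline is essentially a sketch of the proof in \cite{AIR}. The torsion-class verification and the Auslander--Reiten computation showing that $M$ is Ext-projective in ${}^\perp(\tau M)$ are routine and correct as you wrote them, and you rightly isolate covariant finiteness of ${}^\perp(\tau M)$ (via the projective-presentation description $\{X:\Hom_A(p,X)\text{ is surjective}\}$) together with the rank count $rk(\B_M)=rk({}_AA)$ as the two substantive steps requiring the methods of \cite{AIR}. The pushout-along-$p$ mechanism you describe for producing left approximations is the right one.

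The only place your sketch is thin is the sincerity argument in step four. It is not automatic that the left ${}^\perp(\tau M)$-approximation $A\to L$ is a monomorphism, so ``every simple occurs as a composition factor of $L$'' does not follow from abstract approximation properties alone. In \cite{AIR} this is handled by the explicit shape of the Bongartz-type construction (so that $A$ visibly sits inside, or surjects from, the completed object and sincerity is forced), rather than by a formal appeal to the approximation. Since you already flag this step as the place where the techniques of \cite{AIR} are indispensable, this is a minor gap in exposition rather than in strategy.
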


{\sc The trace and the reject.} 
Let $A$ be an algebra, $\X\subseteq\modu\,(A)$ and $M\in\modu\,(A)$. 
The trace of $\X$ in $M$ is $\Tr_\X(M):=\sum\,\{\Ima(f)\;:\;f\in\Hom_A(X,M),\,X\in\X\}$ and the reject of $\X$ in $M$ is $\Rej_\X(M):=\bigcap\,\{\Ker(f)\;:\;f\in\Hom_A(M,X),\,X\in\X\}$.
For a more detailed treatment of the trace and the reject see \cite{AF73}.
\vspace{0.2cm}

{\sc Standardly stratified algebras.} 
Given a basic algebra $A$ we have that $A$ is a left $A$-module over itself.
Then it decomposes as the direct sum $A=\bigoplus_{i=1}^n P(i),$ where $P(1),\ldots,P(n)$ is a complete list of pairwise non-isomorphic indecomposable projective $A$-modules. 
Fix some linear order $\leq$ on the set $[1,n]:=\{1,2,\ldots,n\}$ and let $\overline{P}(i):=\bigoplus_{j > i} P(j)$ for each $i \in [1,n]$.

The family of standard $A$-modules is defined as
$${}_A\Delta:=\left\{{}_A\Delta(i):=\frac{P(i)}{\mathrm{Tr}_{\overline{P}(i)}(P(i))} : i \in [1, n]\right\}.$$
It is well known \cite{DR1} that the standard modules ${}_A\Delta$ together with the linear order $\leq$ on $[1,n]$ form a stratifying system in $\modu\,(A)$ of size $n$.

\begin{definition} 
Let $A$ be a basic algebra, ${}_AA=\bigoplus_{i=1}^n P(i)$ be a decomposition of $_AA$ into indecomposable pairwise non-isomorphic projective $A$-modules and let $\leq$ be a linear order on the set $[1,n]$.
We say that the algebra $A$ is standardly stratified if ${}_AA\in\F({}_A\Delta)$. 
Moreover a stratified algebra is quasi-hereditary if $\End_A({}_A\Delta(i))$ is a division ring for all $i \in [1,n]$.
\end{definition}

{\sc Ext-projective and Ext-injective stratifying systems.}  
It was shown in \cite{Erdmann2003, Marcos2004, Marcos2005a} that given a stratifying system $(\Theta, \leq)$ there is a unique Ext-projective stratifying system $(\Theta, \underline{Q}, \leq)$ and a unique Ext-injective stratifying system $(\Theta,\underline{Y}, \leq)$ associated to $(\Theta, \leq)$, up to isomorphism. 
The definition of Ext-projective and Ext-injective stratifying systems follow.

\begin{definition}\label{def:epss}
Let $A$ be an algebra, $\Theta=\{\Theta(i)\}_{i=1}^t$ be a family of non-zero $A$-modules,
$\underline{Q}=\{Q(i)\}_{i=1}^t$ be a family of indecomposable $A$-modules
and $\leq$ be a linear order on $[1,t].$
The triple $(\Theta,\underline{Q}, \leq )$ is an Ext-projective stratifying system of size $t$ in $\modu\,(A)$
if it satisfies the following three conditions:
\begin{enumerate}
\item[(EPSS1)] $\mathrm{Hom}_A(\Theta(j),\Theta(i))=0$ for all $j> i;$
\item[(EPSS2)] for each $i\in [1,t]$there is an exact sequence 
$$0\to K(i)\rightarrow Q(i)\xrightarrow{\gamma_i}\Theta(i) \to 0$$
where  $K(i)\in\mathcal{F}(\{\Theta(j): j>i\});$
\item[(EPSS3)] $\Ext^1_A(\oplus_{i=1}^tQ(i),X)=0$ for all $X$ in $\F(\Theta)$.
\end{enumerate}
\end{definition}

\begin{definition}\label{def:eiss}
Let $A$ be an algebra,  $\Theta=\{\Theta(i)\}_{i=1}^t$ be a family of non-zero $A$-modules,
$\underline{Y}=\{Y(i)\}_{i=1}^t$ be a family of indecomposable $A$-modules
and let $\leq$ be a linear order on $[1,t]$.
The triple $(\Theta,\underline{Y}, \leq )$ is an Ext-injective stratifying system of size $t$ in $\modu\,(A)$
if it satisfies the following three conditions:
\begin{enumerate}
\item[(EISS1)] $\mathrm{Hom}_A(\Theta(j),\Theta(i))=0$ for all $j> i;$
\item[(EISS2)] for each $i\in [1,t]$ there is an exact sequence 
$$0\to \Theta(i)\xrightarrow{\lambda_i} Y(i)\to  Z(i)\to 0$$
where $Z(i)\in\mathcal{F}(\{\Theta(j): j<i\});$
\item[(EISS3)] $\Ext^1_A(X,\oplus_{i=1}^t Y(i))=0$ for all $X \in \F(\Theta)$.
\end{enumerate}
\end{definition}

Moreover, it was shown in \cite{Erdmann2003, Marcos2004} that the exact category $\F(\Theta)$ is equivalent to the category of modules filtered by the standard modules of the standardly stratified algebras $\Lambda \cong \End_A(\oplus_{i=1}^tQ(i))^{op}$ and $\Gamma \cong \End_A(\oplus_{i=1}^t Y(i))$.


\section{Stratifying systems from $\tau$-rigid modules}\label{sec:Main}

The concept of stratifying system was introduced by K. Erdmann and C. Saenz in \cite{Erdmann2003} to generalise the standard modules of standardly stratified algebras.
In this section we show that we can construct a stratifying system starting from a $\tau$-rigid module, generalising the construction of standard modules from the projective modules. 

This construction relies heavily in the indexing order of the indecomposable direct summands of $\tau$-rigid modules.
This motivates the following definition.

\begin{definition}\label{def:admorder}
Let $A$ be an algebra and $M\in\modu\,(A)$ be a basic non-zero $\tau$-rigid $A$-module.
We say that a decomposition of 
$M= \bigoplus_{i=1}^t M_i$
as the direct sum of indecomposable $A$-modules is \textit{torsion free admissible} (TF-admissible, for short) if 
$M_i \not \in \Fac\left(\bigoplus_{j>i} M_j\right),$
for every $i\in[1,t].$
\end{definition}


\begin{proposition}\label{prop:admissibledecomp}
For an algebra $A$, every basic non-zero $\tau$-rigid module in $\modu\,(A)$ admits a TF-admissible decomposition.
\end{proposition}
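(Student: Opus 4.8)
The plan is to induct on the number $t=rk(M)$ of pairwise non‑isomorphic indecomposable summands of $M$. For $t=1$ there is nothing to do: writing $M=M_1$, the only requirement is $M_1\notin\Fac(0)=\{0\}$, which holds as $M\neq0$. For $t\geq 2$ it suffices to exhibit a single indecomposable summand $N$ of $M$ with $N\notin\Fac(M')$, where $M'$ is the direct sum of the remaining indecomposable summands. Indeed $M'$ is again basic, non‑zero and $\tau$-rigid, with $rk(M')=t-1$, so by induction $M'$ has a TF-admissible decomposition $M'=M_2\oplus\cdots\oplus M_t$; putting $M_1:=N$ we get $M=M_1\oplus\cdots\oplus M_t$, and this is TF-admissible: for $i=1$ the condition is exactly $N\notin\Fac(M')$, while for $i\geq 2$ it is inherited from the decomposition of $M'$, since deleting the summand $N$ only shrinks the tail $\bigoplus_{j>i}M_j$ and hence $\Fac$ of it.

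So everything reduces to the core assertion: a basic non‑zero $\tau$-rigid $M=\bigoplus_{i=1}^t N_i$ admits an index $i$ with $N_i\notin\Fac(\bigoplus_{j\neq i}N_j)$, i.e. some summand whose removal strictly shrinks the torsion class $\Fac(-)$. I would argue by contradiction: if $N_i\in\Fac(\bigoplus_{j\neq i}N_j)$ for all $i$, then $\Fac(\bigoplus_{j\neq i}N_j)=\Fac(M)$ for every $i$. The lever is $\tau$-rigidity in the form $\Ext^1_A(M,\Fac M)=0$ (a standard consequence of $\Hom_A(M,\tau M)=0$, see \cite{AIR}), which also gives $M\in\add\mathbb{P}(\Fac M)$. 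Fix $i$ and an epimorphism $g\colon Y\twoheadrightarrow N_i$ with $Y\in\add(\bigoplus_{j\neq i}N_j)$; applying $\Hom_A(M,-)$ to $0\to\ker g\to Y\to N_i\to 0$ and using $\Ext^1_A(M,Y)=0$, the cokernel of $\Hom_A(M,Y)\to\Hom_A(M,N_i)$ is $\Ext^1_A(M,\ker g)$, and over $\Lambda:=\End_A(M)^{op}$ the source is a sum of the indecomposable projectives $\Hom_A(M,N_j)$ with $j\neq i$, so its image lies in the radical of the indecomposable projective $\Hom_A(M,N_i)$; hence $\Ext^1_A(M,\ker g)$ surjects onto the corresponding simple $\Lambda$-module and is non‑zero. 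The main obstacle is to turn "$\Ext^1_A(M,\ker g)\neq 0$ for all $i$" into a contradiction with $\tau$-rigidity: one must choose the epimorphisms $g$ (via minimal right $\add(\bigoplus_{j\neq i}N_j)$-approximations, or after replacing $\ker g$ by a suitable torsion quotient) so that the relevant syzygies lie in $\Fac(M)$, where $\Ext^1_A(M,-)$ vanishes. Making this step precise — controlling the "projective presentations" inside the exact category $\Fac(M)$ — is the technical heart of the proof.

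An alternative route I would also pursue is to first reduce to $\tau$-tilting modules: by Theorem~\ref{prop:Bongartz}, $M$ is a direct summand of its Bongartz completion $\B_M$, a basic $\tau$-tilting module, and exactly as in the first paragraph any TF-admissible decomposition of $\B_M$ restricts — by keeping only the indecomposable summands that lie in $M$ — to a TF-admissible decomposition of $M$. One is then left to treat a $\tau$-tilting module $T$, where the existence of an indecomposable summand that can be removed without fixing $\Fac(-)$ is equivalent to $T$ admitting a left mutation, and this follows from the support $\tau$-tilting mutation theory of \cite{AIR} (the regular module is the unique source, equivalently $\{0\}$ the unique sink, of the exchange graph, so every $\tau$-tilting module admits a left mutation). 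Either way, the genuine content is concentrated in the existence of such a "removable" indecomposable summand, and that is the step I expect to be the main difficulty.
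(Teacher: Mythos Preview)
Your inductive framework matches the paper's exactly: induct on $t=rk(M)$, and for the step reduce to finding one indecomposable summand $M_1$ with $M_1\notin\Fac(M/M_1)$. The issue is that neither of your two routes to this ``removable summand'' is complete.

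Your first route is honestly labelled as incomplete, and the obstacle you name is real: from the short exact sequence $0\to\ker g\to Y\to N_i\to 0$ you correctly extract $\Ext^1_A(M,\ker g)\neq 0$, but there is no reason for $\ker g$ to lie in $\Fac(M)$ (torsion classes are not closed under subobjects), so this does not contradict Ext-projectivity of $M$ in $\Fac(M)$. I do not see how to repair this without importing the mutation machinery anyway.

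Your second route has a structural circularity. You propose to (i) reduce arbitrary $\tau$-rigid $M$ to its Bongartz completion $\B_M$ and restrict a TF-admissible decomposition of $\B_M$ to $M$ (this restriction step is indeed valid), and (ii) handle $\tau$-tilting modules by mutation plus induction. But step~(ii) removes one summand from a $\tau$-tilting module $T$ and lands on a $\tau$-rigid module of rank $n-1$; applying step~(i) to that module sends you back to a $\tau$-tilting module of rank $n$, and nothing decreases. Also, the existence of a left mutation for every non-zero support $\tau$-tilting module is not just ``$0$ is the unique sink'' (that only says something lies below, not that something is \emph{covered}); it is \cite[Theorem~3.1]{DIJ}.

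The paper's proof is very close to your second route, with one decisive change: it completes $M$ not to the Bongartz completion but to the \emph{co-Bongartz} completion $T=\mathbb{P}(\Fac(M))$, so that $\Fac(T)=\Fac(M)$. Then a left mutation of $T$ (which exists by \cite[Theorem~3.1]{DIJ}) at an indecomposable summand $M_1$ yields $\Fac(N)\subsetneq\Fac(M)$; if $M_1$ were in the complement $\tilde{M}$ one would get $\Fac(M)\subseteq\Fac(N)$, a contradiction, so $M_1$ is a summand of $M$ itself. This gives the removable summand of $M$ directly, the induction on $rk(M)$ closes, and no separate treatment of the $\tau$-tilting case is needed. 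In short: same skeleton as yours, but the right completion (same $\Fac$, not Bongartz) is the missing ingredient.
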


\begin{proof}
Let $M\in\modu\,(A)$ be a basic non-zero $\tau$-rigid $A$-module. 
We prove the above statement by induction on the number of indecomposable direct summands of $M$.

If $M$ is indecomposable, then $M=M_1$ and the claim follows immediately. 

Suppose that $M$ is the direct sum of $t\geq 2$ pairwise non-isomorphic indecomposable $A$-modules. 
Given that $M$ is a $\tau$-rigid $A$-module, Theorem \ref{thm:fftors} implies that $\Fac(M)$ is a functorially finite torsion class in $\mod\,( A)$.
Moreover, there exists a a support $\tau$-tilting module $T$ such that $\Fac(T) = \Fac(M)$ by Theorem \ref{thm:fftors}. 
Then \cite[Proposition 2.9]{AIR} implies that $M$ is a direct summand of $T$.
In other words, $T \cong M \oplus \Tilde{M}$ for some $\tau$-rigid $\Tilde{M}\in\modu\,(A)$. 
Note that $\Tilde{M}$ might be zero.

Now, since $M$ is non-zero we have that $\{0\} \subsetneq \Fac(M)$.
Then \cite[Theorem 3.1]{DIJ} implies the existence of a support $\tau$-tilting module $N\in\modu\,(A)$ which is a mutation of $T$ over an indecomposable direct summand $M_1$ and such that $\{0\} \subseteq \Fac(N) \subsetneq \Fac(M).$

We claim that $M_1$ is an indecomposable direct summand of $M$.
Suppose to the contrary that $M_1$ is a direct summand of $\Tilde{M}$. 
Then we have that $M$ is a direct summand of $N$.
This implies in particular that $\Fac(M) \subseteq \Fac(N) \subsetneq \Fac(M)$, a contradiction.
Hence $M \cong M_1 \oplus M'$, where $M'$ has $t-1$ non-isomorphic indecomposable summands.
Then, by the inductive hypothesis, we get that $M'$ admits a TF-admissible decomposition
$M' = \bigoplus_{i=2}^t M_i,$
where $M_i \not \in \Fac\left(\bigoplus_{j>i} M_j\right)$ for every $2 \leq i \leq t$.

On the other hand, note that $M'$ is also a direct summand of $N$ because we only mutate over $M_1$.
This implies in particular that $\Fac(M') \subseteq \Fac(N).$
Since $M_1 \not \in \Fac(N)$, we can conclude that $M_1 \not \in \Fac(M') = \Fac \left(\bigoplus_{j>1} M_j\right)$. 
Therefore, we get the decomposition of $M$ 
$$M \cong M_1 \oplus M' = M_1 \oplus \left( \bigoplus_{i=2}^t M_i \right) = \bigoplus_{i=1}^t M_i,$$
where $M_i \not \in \Fac\left(\bigoplus_{j>i} M_j\right)$ for every $1 \leq i \leq t$.
\end{proof}

In the following result, we show that the TF-admissible decomposition of any basic $\tau$-rigid $A$-module $M$ can be extended to a TF-admissible decomposition of its Bongartz completion $\B_M$.

\begin{proposition}\label{prop:admissiblecompletion}
Let $A$ be an algebra, $n:=rk({}_AA)$ and let $M\in\modu\,(A)$ be a basic non-zero $\tau$-rigid $A$-module with TF-admissible decomposition $M= \bigoplus_{i=1}^t M_i$. 
Then every decomposition $\B_M = \bigoplus_{j=1}^n N_j$ into indecomposable modules of the Bongartz completion of $M$ such that $M_i \cong N_{n-t+i}$  $\forall\,i\in[1,t]$ is a TF-admissible decomposition of $\B_M$.
\end{proposition}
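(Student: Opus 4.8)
The plan is to establish the TF-admissibility of the decomposition $\B_M=\bigoplus_{j=1}^n N_j$ one index at a time, working from the largest index downward, and splitting the verification into two ranges: the indices $j\in[n-t+1,n]$ coming from $M$, and the ``new'' indices $j\in[1,n-t]$ coming from the Bongartz completion. For the first range, I would exploit that the relevant tail sums $\bigoplus_{k>j}N_k$ are exactly the tail sums of the given TF-admissible decomposition of $M$, so the condition $N_j\notin\Fac(\bigoplus_{k>j}N_k)$ is immediate from the hypothesis on $M$. The real content is the second range.

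For the ``new'' summands, the key observation is that $\Fac(\B_M)={}^\perp(\tau M)$ by Theorem~\ref{prop:Bongartz} (more precisely, $\B_M=\mathbb{P}({}^\perp(\tau M))$ and ${}^\perp(\tau M)$ is a torsion class, so $\Fac(\B_M)={}^\perp(\tau M)$), whereas $\Fac(M)$ is the smaller torsion class with the same Ext-projectives restricted to $M$. Fix $j\in[1,n-t]$ and set $M_j':=\bigoplus_{k>j}N_k$, which contains all of $M=\bigoplus_{i=1}^t M_i$ as a direct summand together with the new summands $N_{j+1},\dots,N_{n-t}$. I would argue that $N_j\in\Fac(M_j')$ would force $N_j$ to be Ext-projective in the torsion class $\Fac(M_j')$ as well (since $N_j$ is already Ext-projective in the larger class $\Fac(\B_M)\supseteq\Fac(M_j')$, and Ext-projectivity is inherited by subclasses that are still torsion classes, using that $\Fac(M_j')$ is functorially finite by Theorem~\ref{thm:fftors}); but then $N_j$ would be a summand of $\mathbb{P}(\Fac(M_j'))=M_j'$ up to the support idempotent, contradicting basicness of $\B_M$ — $N_j$ is not isomorphic to any $N_k$ with $k>j$. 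This is the same mechanism that drives the proof of Proposition~\ref{prop:admissibledecomp}: a summand that lies in the factor category of the other summands cannot be a genuinely new Ext-projective.

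Concretely, I expect the cleanest route is to mimic the mutation argument of Proposition~\ref{prop:admissibledecomp} but now inside the Bongartz completion. One orders the new summands $N_1,\dots,N_{n-t}$ so that each successive mutation strictly shrinks the torsion class: starting from $\B_M$ with $\Fac(\B_M)={}^\perp(\tau M)$ and repeatedly applying \cite[Theorem 3.1]{DIJ} to descend toward $\Fac(M)$, one obtains a chain of support $\tau$-tilting modules differing by mutation, and each mutated-away summand is forced (by the same ``it cannot be a summand of $\Tilde M$'' argument as before) to be one of the new $N_j$'s rather than one of the $M_i$'s, with the strict inclusion $\Fac(\bigoplus_{k>j}N_k)\subsetneq$ (the torsion class before mutation) yielding $N_j\notin\Fac(\bigoplus_{k>j}N_k)$. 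One has to check that the indices fall out in the prescribed order $N_1,\dots,N_{n-t},M_1,\dots,M_t$, i.e.\ that the whole ordered decomposition is realized by such a descending chain; this compatibility of the two orderings is the main obstacle, and I would handle it by first fixing the chain of torsion classes $\Fac(\B_M)=\T_0\supsetneq\T_1\supsetneq\dots\supsetneq\T_{n-t}=\Fac(M)\supsetneq\T_{n-t+1}\supsetneq\dots\supsetneq\T_n=\{0\}$ where the lower portion is the one witnessing TF-admissibility of $M$, and then reading off the summands mutated at each step, checking that they assemble into the given indexing $N_1,\dots,N_n$. The verification that Ext-projectivity transfers along these inclusions, and that the mutated summand at each step is the ``missing'' $N_j$, are the routine-but-delicate points; everything else is bookkeeping with traces and $\Fac$.
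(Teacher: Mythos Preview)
Your proposal has a genuine gap, and it stems from misreading the quantifier in the statement. The proposition asserts that \emph{every} decomposition $\B_M=\bigoplus_{j=1}^n N_j$ with $M_i\cong N_{n-t+i}$ is TF-admissible; in particular, the ordering of the new summands $N_1,\dots,N_{n-t}$ is arbitrary. Your ``cleanest route'' produces a single ordering, namely the one coming from a descending chain of mutations from $\Fac(\B_M)$ down to $\Fac(M)$. You recognise this yourself (``compatibility of the two orderings is the main obstacle'') but do not resolve it, and in fact it cannot be resolved along these lines: an arbitrary prescribed labelling of the $\tilde M$-summands need not be realised by any descending chain of mutations. Your first approach also has a gap: the equality $\mathbb{P}(\Fac(M_j'))=M_j'$ is false in general. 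The module $\mathbb{P}(\Fac(M_j'))$ is the support $\tau$-tilting module generating $\Fac(M_j')$, which has $M_j'$ as a direct summand but is typically strictly larger, so $N_j\mid \mathbb{P}(\Fac(M_j'))$ gives no contradiction with basicness of $\B_M$.

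The paper sidesteps both problems by proving a stronger, order-free statement: for \emph{every} indecomposable $X\in\add(\tilde M)$ one has $X\notin\Fac\bigl(M\oplus \tilde M/X\bigr)$. The argument uses a single mutation of $\B_M$ at $X$ and \cite[Theorem~2.18]{AIR}, which says that because $\B_M$ is the Bongartz completion of $M$, this mutation produces a support $\tau$-tilting module $T_X$ with $\Fac(T_X)\subsetneq{}^\perp(\tau M)=\Fac(\B_M)$. If $X\in\Fac(M\oplus\tilde M/X)\subseteq\Fac(T_X)$, then $\Fac(\B_M)\subseteq\Fac(T_X)$, a contradiction. This strong form of non-generation immediately gives $N_j\notin\Fac(\bigoplus_{k\neq j}N_k)\supseteq\Fac(\bigoplus_{k>j}N_k)$ for every $j\in[1,n-t]$, regardless of how the new summands are ordered; the remaining indices are handled, as you say, by the TF-admissibility of $M$. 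The key reference you are missing is \cite[Theorem~2.18]{AIR} rather than \cite[Theorem~3.1]{DIJ}: one mutation from the Bongartz completion, not a chain.
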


\begin{proof}
Let $\B_M$ be the Bongartz completion of $M$.
Then $\B_M=\tilde{M}\oplus M$ by \cite[Proposition 2.9]{AIR}.

We start by proving that
$$(*)\quad X\not\in\Fac\left(M\oplus\frac{\tilde{M}}{X}\right)\;\text{ for any indecomposable }\,\,X\in\add\,(\tilde{M}).$$
Indeed, by Proposition \ref{prop:Bongartz} we have $\Fac(M\oplus\tilde{M})={}^\perp(\tau M)$.
Consider $T_X\in\modu\,(A)$ which is the mutation of $M\oplus\tilde{M}$ over an indecomposable direct summand $X$ of $\tilde{M}$. Then $\Fac(T_X)\subsetneq{}^\perp(\tau M)$ by \cite[Theorem 2.18]{AIR}.
Suppose that 
$X\in\Fac(T_X).$ Then ${}^\perp(\tau M)=\Fac(M\oplus\tilde{M})\subseteq\Fac(T_X)\subsetneq{}^\perp(\tau M),$ which is a contradiction and thus $(*)$ holds true.
\

If $i\in[1,a]$ then by $(*)$ we know that $N_i=\tilde{M_i}\not\in\Fac(\bigoplus_{j\neq i}N_j).$ 
In particular $N_i\not\in\Fac(\bigoplus_{j> i}N_j)$. 
If $i\in[a+1,n],$ then by the fact that $M= \bigoplus_{i=1}^t M_i$ is TF-admissible, we conclude that 
$N_i=M_{i-a}\not\in\Fac(\bigoplus_{k>i-a}M_k)=\Fac(\bigoplus_{j> i}N_j)$.
\end{proof}

\begin{theorem}\label{thm:taustratsys}
Let $A$ be an algebra such that $rk({}_AA)=n$ and $M\in\modu\,(A)$ be a basic non-zero $\tau$-rigid module with a TF-admissible decomposition $M= \bigoplus_{i=1}^t M_i$.
If $\f_k$ is the torsion free functor associated to the torsion pair 
$\left( \Fac(\bigoplus_{j \geq k}M_j), (\bigoplus_{j \geq k}M_j)^\perp  \right),$
then the following statements hold true.
\begin{itemize}
\item[(a)] The family $\Delta_M := \{ \Delta_M(i) := \f_{i+1}(M_i) \}_{i=1}^t$ and the natural order on $[1,t]$ form  a stratifying 
system in $\modu\,(A)$ of size $t$.
\item[(b)] There exists at least one stratifying system 
$(\Delta'_M,\leq')$ of size $n\geq t$ in $\modu\,(A),$ where $\leq'$ is the natural order on 
$[1,n],$ such that $\Delta_M(i) = \Delta'_M(n-t+i)$ for all $i \in [1,t].$
\end{itemize}
\end{theorem}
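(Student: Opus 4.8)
The core of the theorem is part (a): showing that the modules $\Delta_M(i) := \f_{i+1}(M_i)$ together with the natural order form a stratifying system. Part (b) should then follow by applying part (a) to the Bongartz completion $\B_M$, using Proposition~\ref{prop:admissiblecompletion} to extend the TF-admissible decomposition of $M$ to one of $\B_M$, and then restricting the resulting size-$n$ stratifying system's indexing so that the last $t$ members recover $\Delta_M$; one must check the re-indexing is compatible, i.e. that the torsion-free functors $\f_k$ computed inside $\modu\,(A)$ with respect to $\Fac(\bigoplus_{j\geq k} N_j)$ agree, for the relevant indices, with those used to define $\Delta_M$ — and here the point is that for $i > n-t$ the tail $\bigoplus_{j\geq i} N_j$ is exactly a tail $\bigoplus_{j\geq i-(n-t)} M_j$ of $M$, so the functors literally coincide.

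\textbf{Proving (SS1).} First I would record the basic structural facts about $\Delta_M(i)$. Write $\F_k := \Fac(\bigoplus_{j\geq k} M_j)$, a functorially finite torsion class by Theorem~\ref{thm:fftors}, with torsion-free class $(\bigoplus_{j\geq k}M_j)^\perp$. By definition there is a canonical sequence $0 \to \t_{i+1}(M_i) \to M_i \to \Delta_M(i) \to 0$ with $\t_{i+1}(M_i) \in \F_{i+1}$ and $\Delta_M(i) \in (\bigoplus_{j\geq i+1} M_j)^\perp$. TF-admissibility gives $M_i \notin \F_{i+1}$, so $\Delta_M(i) \neq 0$; I would also argue $\Delta_M(i)$ is indecomposable — this needs that $M_i$ is indecomposable and $\tau$-rigid and that $\t_{i+1}(M_i)$ is, in the appropriate sense, a minimal subobject, which one extracts from the $\tau$-tilting reduction picture (the functor $\f_{i+1}$ restricted to $\add(M)$, or rather to $^\perp(\tau(\bigoplus_{j\geq i+1}M_j))$, preserves indecomposability of the relevant summands — cf. the Ext-projective description $\mathbb{P}$). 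For (SS1): if $j > i$ then $\Theta(j) = \Delta_M(j) \in (\bigoplus_{k\geq j}M_k)^\perp \subseteq (M_j)^\perp$, so in particular any map $\Delta_M(j)\to\Delta_M(i)$ must kill $M_j$; more to the point, I would show $\Delta_M(j) \in \Fac(\bigoplus_{k\geq j} M_k) = \F_j$ (since it is a quotient of $M_j$), while $\Delta_M(i) \in \F_{i+1}^\perp \supseteq \F_j^\perp$ because $j \geq i+1$ forces $\bigoplus_{k\geq j}M_k$ to be a summand of $\bigoplus_{k\geq i+1}M_k$, hence $\Hom_A(\Delta_M(j),\Delta_M(i)) \subseteq \Hom_A(\F_j, \F_j^\perp) = 0$.

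\textbf{Proving (SS2) — the main obstacle.} This is where the real work is: show $\Ext^1_A(\Delta_M(i),\Delta_M(j)) = 0$ whenever $j \geq i$. The idea is to compare the canonical sequence $0 \to \t_{i+1}(M_i) \to M_i \to \Delta_M(i) \to 0$ against $\Delta_M(j)$, using the long exact sequence in $\Ext$: it suffices to control $\Ext^1_A(M_i, \Delta_M(j))$ and $\Hom_A(\t_{i+1}(M_i), \Delta_M(j))$. For the $\Hom$ term, $\t_{i+1}(M_i) \in \F_{i+1}$, and since $j \geq i$, I want $\Delta_M(j) \in \F_{i+1}^\perp$; this holds because $\Delta_M(j) \in \F_{j+1}^\perp$ and, er, I need $\F_{i+1} \subseteq \F_{j+1}$ — which is the \emph{wrong} inclusion when $j \geq i$. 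So instead I would go the other way: when $j = i$, handle it by noting $\Delta_M(i)$ is $\tau$-rigid-ish / Ext-projective in $\F_{i+1}$ in the sense coming from $\tau$-tilting reduction (Jasso's reduction, or the fact that $M$ being $\tau$-rigid makes $\Fac(M)$-relative Ext between the $\Delta$'s vanish appropriately), so $\Ext^1_A(\Delta_M(i),\Delta_M(i)) = 0$. When $j > i$: $\Delta_M(j) \in \F_j \subseteq \F_{i+1}$, and I claim $M_i$ — being $\tau$-rigid — has $\Ext^1_A(M_i, Y) = 0$ for every $Y \in \Fac(M) \supseteq \F_{i+1} \ni \Delta_M(j)$, because a $\tau$-rigid module is Ext-projective in $\Fac(M)$ (this is \cite[Proposition 2.9 / 2.10]{AIR}: $M$ is Ext-projective in $\Fac M$, i.e.\ $\Ext^1_A(M, \Fac M)=0$). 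That kills $\Ext^1_A(M_i, \Delta_M(j))$. It remains to kill $\Hom_A(\t_{i+1}(M_i), \Delta_M(j))$: here $\t_{i+1}(M_i) \in \F_{i+1} = \Fac(\bigoplus_{k\geq i+1}M_k)$, so it is a quotient of a power of $\bigoplus_{k\geq i+1}M_k$; it would suffice that $\Hom_A(M_k, \Delta_M(j)) = 0$ for all $k \geq i+1$ — but that is false in general (take $k = j$ is fine since $\Delta_M(j)$ is a quotient of $M_j$... no). The honest route is: $\Delta_M(j) \in (\bigoplus_{\ell\geq j+1}M_\ell)^\perp$, and $\t_{i+1}(M_i)$ need not land in the span of $M_{\geq j+1}$. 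So the genuine heart of the proof is a finer induction: work one mutation step at a time, or equivalently use that $(\Delta_M(i))_{i=1}^t$ is built by iterated torsion-truncation and that $\Fac(M) = \F(\Delta_M)$ as exact categories (which is essentially part (c) of the summarized Theorem~1.2). I expect the cleanest argument is to prove simultaneously, by downward induction on $t$, that $\Delta_M$ is a stratifying system and that $\mathbb{P}(\F_1) = M$ plays the role of the "projective generator", deducing (SS2) from the Ext-projectivity of $M$ in $\Fac(M)$ together with the fact that each $\F_{i+1}$ is a torsion class inside which $\bigoplus_{k\geq i+1}M_k$ is Ext-projective; the subtlety of getting the $\Hom$-vanishing $\Hom_A(\t_{i+1}(M_i), \Delta_M(j)) = 0$ for $j>i$ is what I'd flag as the main obstacle, and I would resolve it by showing $\t_{i+1}(M_i) \in \F(\{\Delta_M(k) : k > i\})$ (the kernel of the approximation is filtered by the later $\Delta$'s, paralleling (EPSS2)) and then each such $\Delta_M(k)$ with $k > i$ satisfies $\Hom_A(\Delta_M(k), \Delta_M(j)) = 0$ when $k > j$ by (SS1), while the cases $i < k \leq j$ contribute via a separate induction hypothesis — so the whole thing closes up as one downward induction on the size $t$.
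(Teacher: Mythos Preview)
Your argument for (SS1) and your plan for (b) are essentially the paper's (modulo a slip: you want $\F_{i+1}^\perp \subseteq \F_j^\perp$ when $j \geq i+1$, not $\supseteq$; that is what actually places $\Delta_M(i)$ in $\F_j^\perp$). The real problem is (SS2): you are trying to prove the wrong inequality. The condition (SS2) as printed in Definition~\ref{def:stratsys} contains a typo; consistency with the Ext-projective framework of Definition~\ref{def:epss} (where $Q(t)=\Theta(t)$ must be Ext-projective in $\F(\Theta)$) and with the standard-module conventions forces the correct condition to be $\Ext^1_A(\Theta(i),\Theta(j))=0$ for $i \geq j$, not $j \geq i$ --- and indeed this is exactly what the paper's own proof establishes.

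With the correct direction $i \geq j$, your ``wrong inclusion'' becomes the right one: $i \geq j$ gives $\F_{i+1} \subseteq \F_{j+1}$, so $\t_{i+1}(M_i) \in \F_{i+1} \subseteq \F_{j+1}$ while $\Delta_M(j) \in \F_{j+1}^\perp$, whence $\Hom_A(\t_{i+1}(M_i),\Delta_M(j))=0$ immediately. Combined with $\Ext^1_A(M_i,\Delta_M(j))=0$ (which you already had, from Ext-projectivity of $M$ in $\Fac(M)$), the long exact sequence finishes (SS2) in one line --- this is precisely the paper's argument. No downward induction and no filtration of $\t_{i+1}(M_i)$ by later $\Delta_M(k)$'s is needed; in fact that filtration claim is \emph{not} available in general: it is the content of Corollary~\ref{CorApplT2}(a), which requires the extra hypothesis $M \in \F(\Delta_M)$, so your proposed workaround would not have closed the gap anyway.
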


We say that $(\Delta_M, \leq)$ is the $M$-standard stratifying system of $M$ associated to the TF-admissible decomposition $M= \bigoplus_{i=1}^t M_i$.

\begin{proof} 
(a) Consider the family $\Delta_M$ as defined in the statement. 
Then, it follows directly from \cite[Lemma 4.6]{BM} that each $\Delta_M(i)$ is indecomposable since $M_i\not\in\Fac(\bigoplus_{k \geq i+1}M_k).$

We claim that $\Hom_A(\Delta_M(i), \Delta_M(j))=0$ if $i > j$.
Indeed, we have that $\Delta_M(j) \in (\bigoplus_{k > j}M_k)^\perp$ by definition.
Also $\Delta_M(i)$ is a quotient of $M_i$, implying that $\Delta_M(i)\in \Fac (\bigoplus_{k \geq i}M_k)$.
Moreover $\Fac \left(\bigoplus_{k \geq i}M_k\right)\subseteq \Fac \left(\bigoplus_{k > j}M_k\right)$.
Our claim follows since $(\Fac (\bigoplus_{k > j}M_k),(\bigoplus_{k > j}M_k)^\perp)$ is a torsion pair in $\modu\,(A)$.

To finish the proof of (a) we need to show that $\Ext^1_A(\Delta_M(i), \Delta_M(j))=0$ if $i \geq j.$ In order to do that, we assume that 
$i \geq j$ and consider the canonical short exact sequence 
$$0 \to \t_{i+1}(M_i) \to M_i \to \Delta_M(i) \to 0$$
of $M_i$ with respect to the torsion pair $\left( \Fac (\bigoplus_{k \geq i+1}M_k), (\bigoplus_{k \geq i+1}M_k)^\perp  \right).$ By
applying the functor $\Hom_A(-,\Delta_M(j))$ to the previous canonical exact sequence, we obtain the following exact sequence
$$\Hom_A(\t_{i+1}(M_i), \Delta_M(j)) \to \Ext^1_A(\Delta_M(i), \Delta_M(j)) \to \Ext^1_A(M_i, \Delta_M(j)).$$
Note that $M_i$ is Ext-projective in $\Fac(M)$ by \cite[Corollary 5.9]{Auslander1981} thus $\Ext^1_A(M_i, \Delta_M(j))=0$.
On the other hand, from $i \geq j,$ we get $\t_{i+1}(M_i)\in \Fac (\bigoplus_{k \geq i+1}M_k) \subseteq\Fac (\bigoplus_{k \geq j+1}M_k)$.
Thus $\Hom_A(\t_{i+1}(M_i), \Delta_M(j))=0$,
since $\left( \Fac (\bigoplus_{k \geq j+1}M_k), (\bigoplus_{k \geq j+1}M_k)^\perp  \right)$ is a torsion pair in $\mod(A)$ and
$\Delta_M(j) \in (\bigoplus_{k \geq j+1}M_k)^\perp$.
Then we have that $\Ext^1_A(\Delta_M(i), \Delta_M(j))=0$ as claimed. 
\

(b) It follows directly from Proposition \ref{prop:admissiblecompletion} and (a).
\end{proof}

\begin{remark}\label{rmk:notchar}
It is important to notice that the size of a stratifying system induced by a $\tau$-rigid object is bounded by the number of non-isomorphic indecomposable simple $A$-modules. 
In \cite[Remark 2.7]{Marcos2004}, the authors build a stratifying system of length 5 in the module category of an algebra of rank 4. 
This implies that Theorem \ref{thm:taustratsys} is not a characterisation of stratifying systems.
\end{remark}

Let $\X\subseteq\modu\,(A)$.
Following \cite{DIJ}, we denote by $\top(\X)$ the smallest torsion class in $\modu\,(A)$ containing 
$\X.$
We now study the minimal torsion class containing a stratifying stratifying system induced by a $\tau$-rigid module.

\begin{corollary}\label{CorApplT1} 
Let $A$ be an algebra, $M\in\modu\,(A)$ be a basic non-zero $\tau$-rigid $A$-module and let $\Delta_M$ be the $M$-standard system associated to the given TF-admissible decomposition $M= \bigoplus_{i=1}^t M_i$ of $M$. 
For each $i\in[1,t]$, consider the canonical exact sequence 
$$0\to \t_{i+1}(M_i)\to M_i\xrightarrow{\beta_i}\Delta_M(i)\to 0$$
of $M_i$ with respect to the torsion pair $(\Fac(\overline{M}_i),\overline{M}_i^\perp)$ where $\overline{M}_i = \bigoplus_{k>i} M_k$ and let $h_i:N_i\to M_i$ be the right minimal $\add(\overline{M}_i)$-approximation of $M_i$. 
Then, the following statements hold true.
\begin{itemize}
\item[(a)] $\Ima(h_i)=\t_{i+1}(M_i)=\Tr_{\overline{M}_i}(M_i)$ and  $\Delta_M(i)\simeq \frac{M_i}{\Tr_{\overline{M}_i}(M_i)}$ $\forall\,i\in[1,t].$
\item[(b)] $\beta_i:M_i\to \Delta_M(i)$ is a right minimal $\add(M)$-approximation of $\Delta_M(i)$  $\forall\,i\in[1,t].$
\item[(c)] $\F(\Delta_M)\subseteq\Fac(M)=\top(\Delta_M).$ 
\end{itemize}
\end{corollary}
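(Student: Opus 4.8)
\textbf{Proof plan for Corollary \ref{CorApplT1}.}

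The plan is to prove the three parts in the order (a), (b), (c), since (b) will use (a) and (c) will use both. For part (a), the identification $\t_{i+1}(M_i)=\Tr_{\overline{M}_i}(M_i)$ is the heart of the matter. First I would recall that, by Theorem \ref{thm:fftors}, $\Fac(\overline{M}_i)$ is a functorially finite torsion class, so the torsion submodule $\t_{i+1}(M_i)$ is the largest submodule of $M_i$ lying in $\Fac(\overline{M}_i)$. On the other hand, $\Tr_{\overline{M}_i}(M_i)$ is by definition the sum of all images of maps $\overline{M}_i^{\,\ell}\to M_i$, hence it is the largest submodule of $M_i$ that is a quotient of a direct sum of copies of $\overline{M}_i$, i.e.\ the largest submodule lying in $\Fac(\overline{M}_i)$. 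So the two coincide. For the statement $\Ima(h_i)=\Tr_{\overline{M}_i}(M_i)$: since $h_i\colon N_i\to M_i$ is a right $\add(\overline{M}_i)$-approximation with $N_i\in\add(\overline{M}_i)$, every map $X\to M_i$ with $X\in\add(\overline{M}_i)$ factors through $h_i$, so $\Ima(h_i)=\sum\{\Ima(f): f\in\Hom_A(X,M_i),\,X\in\add(\overline{M}_i)\}=\Tr_{\overline{M}_i}(M_i)$. Then $\Delta_M(i)=\f_{i+1}(M_i)=M_i/\t_{i+1}(M_i)=M_i/\Tr_{\overline{M}_i}(M_i)$ by definition of the torsion-free functor.

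For part (b), I want to show $\beta_i\colon M_i\to\Delta_M(i)$ is a right minimal $\add(M)$-approximation. Apply $\Hom_A(M_j,-)$ to the exact sequence $0\to\t_{i+1}(M_i)\to M_i\xrightarrow{\beta_i}\Delta_M(i)\to 0$ for each indecomposable summand $M_j$ of $M$. If $j\le i$, then $M_j\in\Fac(\bigoplus_{k\ge j}M_k)$, and since $\t_{i+1}(M_i)\in\Fac(\overline{M}_i)\subseteq\Fac(\bigoplus_{k\ge j+1}M_k)$ lies in the torsion class whose torsion-free part contains $\Delta_M(j)$... actually the cleaner route: $\Ext^1_A(M_j,\t_{i+1}(M_i))$—here I would instead argue surjectivity of $\Hom_A(M,\beta_i)$ directly. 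A map $g\colon M'\to\Delta_M(i)$ with $M'\in\add(M)$: since $M'$ is $\tau$-rigid and hence Ext-projective in $\Fac(M)$ by \cite[Corollary 5.9]{Auslander1981}, and $\Delta_M(i)$ is a quotient of $M_i\in\Fac(M)$... the surjectivity follows by lifting along $\beta_i$ using $\Ext^1_A(M',\t_{i+1}(M_i))=0$, which holds because $\t_{i+1}(M_i)\in\Fac(M)$ and $M'$ is Ext-projective in $\Fac(M)$. For minimality, I would invoke that $\beta_i$ is right minimal because its kernel $\t_{i+1}(M_i)$ contains no nonzero direct summand of $M_i$ splitting off; more precisely, any $h\colon M_i\to M_i$ with $\beta_i h=\beta_i$ satisfies $(1-h)(M_i)\subseteq\t_{i+1}(M_i)$, and combined with $M_i$ being indecomposable and not in $\Fac(\overline{M}_i)$ (so $\t_{i+1}(M_i)\subsetneq M_i$ and $M_i/\mathrm{rad}$-type argument, or the fact that $\beta_i$ is a cover by construction in \cite[Lemma 4.6]{BM}) one concludes $h$ is an isomorphism.

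For part (c), first $\F(\Delta_M)\subseteq\Fac(M)$: each $\Delta_M(i)$ is a quotient of $M_i$, hence lies in $\Fac(M)$, and $\Fac(M)$ is a torsion class, hence closed under extensions, so it contains everything filtered by the $\Delta_M(i)$. Next $\top(\Delta_M)\subseteq\Fac(M)$ is then immediate since $\Fac(M)$ is a torsion class containing $\Delta_M$. For the reverse inclusion $\Fac(M)\subseteq\top(\Delta_M)$, it suffices to show each indecomposable summand $M_i$ lies in $\top(\Delta_M)$, since $\top(\Delta_M)$ is closed under direct sums and quotients and $\Fac(M)=\top(\{M_1,\dots,M_t\})$. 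I would do this by descending induction on $i$: for $i=t$ we have $\overline{M}_t=0$, so $\Delta_M(t)=M_t$ and there is nothing to prove; for general $i$, the exact sequence $0\to\t_{i+1}(M_i)\to M_i\to\Delta_M(i)\to 0$ exhibits $M_i$ as an extension of $\Delta_M(i)\in\top(\Delta_M)$ by $\t_{i+1}(M_i)=\Tr_{\overline{M}_i}(M_i)$, which is a quotient of a direct sum of copies of $M_{i+1},\dots,M_t$, all in $\top(\Delta_M)$ by induction; since a torsion class is closed under quotients and extensions, $M_i\in\top(\Delta_M)$.

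\textbf{Main obstacle.} The technical crux is part (b): cleanly establishing that $\beta_i$ is a right \emph{minimal} $\add(M)$-approximation, not merely an approximation. The approximation property is a routine Ext-vanishing/lifting argument using Ext-projectivity of $\add(M)$ in $\Fac(M)$; minimality is more delicate and I expect it to hinge on the fact (from \cite[Lemma 4.6]{BM}) that $\beta_i$ realizes $\Delta_M(i)$ as an $\add(M)$-cover, together with the TF-admissibility hypothesis ensuring $\t_{i+1}(M_i)$ is a proper submodule with no superfluous summands.
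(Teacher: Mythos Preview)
Your proof is correct, and parts (a) and (b) match the paper's approach almost exactly: the paper cites \cite[Lemma 2.3]{BSTwc} and \cite[Proposition 8.20]{AF73} for (a) where you argue directly, and for (b) it uses the same Ext-vanishing lifting argument via Ext-projectivity of $\add(M)$ in $\Fac(M)$. Your worry about minimality is, however, misplaced: the paper dispatches it in one line by observing that \emph{any} nonzero morphism out of an indecomposable module is right minimal (if $\beta_i h=\beta_i$ and $h$ were not an isomorphism, then $h$ would lie in the radical of the local ring $\End_A(M_i)$, hence be nilpotent, forcing $\beta_i=0$). So what you flag as the ``main obstacle'' is in fact trivial, and you do not need \cite[Lemma~4.6]{BM} or any analysis of $\t_{i+1}(M_i)$ for this point.

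Where you genuinely diverge is part (c). The paper does \emph{not} use your descending induction. Instead, for each $i$ it introduces $M'_i:=\bigoplus_{j\neq i}M_j$, the associated torsion pair $(\Fac(M'_i),{M'_i}^\perp)$, and the torsion-free quotient $\f'_i(M_i)$; it then invokes \cite[Lemma~3.7]{DIJ} to get $M\in\top(\bigoplus_i \f'_i(M_i))$, and finally shows each $\f'_i(M_i)$ is a quotient of $\Delta_M(i)$ (since $\overline{M}_i^\perp\supseteq {M'_i}^\perp$), so $\f'_i(M_i)\in\top(\Delta_M)$. Your argument is more elementary and self-contained: it avoids the external lemma from \cite{DIJ} and stays entirely within the canonical sequences already introduced, at the cost of nothing. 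The paper's route, on the other hand, makes explicit the intermediate objects $\f'_i(M_i)$, which connect to the brick labelling of \cite{DIJ}; that structural information is not visible in your induction, but it is also not needed for the statement at hand.
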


\begin{proof} 
(a) We have that  $\Ima(h_i)=\t_{i+1}(M_i)$ for all $i \in [1,t]$ by \cite[Lemma 2.3]{BSTwc}.
Moreover, \cite[Proposition 8.20]{AF73} implies that $\t_{i+1}(M_i)=\Tr_{\Fac(\overline{M}_i)}(M_i)=\Tr_{\overline{M}_i}(M_i)$.

(b) Let $i\in[1,t].$ Since $M_i$ is indecomposable, it is enough to show that $\beta_i:M_i\to \Delta_M(i)$ is a right $\add(M)$-approximation of $\Delta_M(i)$.
By applying the functor $\Hom_A(X,-)$ to the exact sequence $0\to \t_{i+1}(M_i)\to M_i\xrightarrow{\beta_i} \Delta_M(i)\to 0,$ for any $X\in\add(M),$ we get the exact sequence
$$\Hom_A(X,M_i)\xrightarrow{(X,\beta_i)}\Hom_A(X,\Delta_M(i))\to\Ext^1_A(X,\t_{i+1}(M_i)).$$
 Note that $M$ is Ext-projective in $\Fac(M)$ by \cite[Corollary 5.9]{Auslander1981}, and 
 $\t_{i+1}(M_i)\in\Fac(\overline{M}_i)\subseteq\Fac(M),$ which implies that $\Ext^1_A(X,\t_{i+1}(M_i))=0.$ Thus $\beta_i:M_i\to \Delta_M(i)$ is a right $\add(M)$-approximation, proving (b).
 \
 
 (c) By using the canonical exact sequence given above, it follows that $\Delta_M\subseteq\Fac(M).$ Therefore $\F(\Delta_M)\subseteq\Fac(M)$ and $\top(\Delta_M)\subseteq\Fac(M).$ For each $i\in[1,t],$ let $M'_i:=\bigoplus_{j\neq i}\,M_j.$ Consider the torsion pair 
 $(\Fac(M'_i),{M'_i}^\perp)$ in $\modu\,(A),$  the canonical exact sequence 
 $$0\to \t'_i(M_i)\to M_i\xrightarrow{\beta'^i}\f'_i(M_i)\to 0$$
 and the $\add(M'_i)$-cover $h'_i:N_i\to M_i$ of $M_i.$ Then, by \cite[Lemma 2.3]{BSTwc}, we have that $\t'_i(M_i)=\Ima\,(h'_i).$ Hence, from \cite[Lemma 3.7]{DIJ}, it follows that $M\in\top(\bigoplus_{i=1}^t\,\f'_i(M_i));$  and thus, $\Fac(M)\subseteq \top(\bigoplus_{i=1}^t\,\f'_i(M_i)).$ On the other hand, since $\overline{M}_i\subseteq M'_i,$ we have $\f'_i(M_i)\in{M'_i}^\perp\subseteq \overline{M}_i^\perp.$ Hence there is some $\gamma_i:\Delta_M(i)\to \f'_i(M_i)$ such that $\gamma_i\beta_i=\beta'_i.$ 
 Moreover $\gamma_i$ is an epimorphism since $\beta'_i$ is so. Then, we get that  $\f'_i(M_i)\in\top(\Delta_M)$ and thus 
 $\Fac(M)\subseteq \top(\bigoplus_{i=1}^t\,\f'_i(M_i))\subseteq \top(\Delta_M).$
\end{proof}

\begin{remark} Let $A$ be basic algebra. 
Note that any decomposition into indecomposables ${}_AA=\bigoplus_{i=1}^n{}_AP(i)$ of the $\tau$-rigid module ${}_AA$ is TF-admissible. Moreover, by Corollary \ref{CorApplT1} (a), we get that the ${}_AA$-standard system coincides with the usual standard $A$-modules.
\end{remark}

In general it is not true that ${}_AA\in\F({}_A\Delta)$ unless $A$ be standardly stratified. 
Thus it is worth wondering what happens if we assume that $M\in\F(\Delta_M)$ where $M$ is a $\tau$-rigid module with TF-admissible decomposition $M= \bigoplus_{i=1}^t M_i$.

\begin{corollary}\label{CorApplT2} 
Let $A$ be an algebra, $M\in\modu\,(A)$ be a basic non-zero $\tau$-rigid $A$-module, $\Delta_M$ be the $M$-standard system associated with the TF-admissible decomposition $M= \bigoplus_{i=1}^t M_i$ of $M$ and define $\overline{M}_i:=\bigoplus_{k>i}\,M_k$.
If $M\in\F(\Delta_M),$ then  the following statements hold true.
\begin{itemize}
\item[(a)] $\Tr_{\overline{M}_i}(M_i)\in\F(\Delta_M(j)\;:\;j>i)$ for each $i\in[1,t].$
\item[(b)] $(\Delta_M,\{M_i\}_{i=1}^t,\leq)$ is an Ext-projective stratifying system in $\modu\,(A)$ of size $t,$ where $\leq$ is the natural order on $[1,t].$  
\item[(c)] $\Lambda:=\End_A(M)^{op}$ is a basic standardly stratified algebra with respect to the decomposition into indecomposables 
$\Lambda=\bigoplus_{i=1}^t {}_\Lambda P(i),$ where ${}_\Lambda P(i):=\Hom_A(M,M_i),$ and the natural order $\leq$ on $[1,t].$
\item[(d)] The functor $\Hom_A(M,-):\F(\Delta_M)\to \F({}_\Lambda\Delta)$ is an equivalence of categories with a quasi-inverse given 
by $M\otimes_\Lambda-:\F({}_\Lambda\Delta)\to\F(\Delta_M).$
\item[(e)] $\Hom_A(M,\Delta_M(i))\simeq {}_\Lambda\Delta(i)$ and  $M\otimes_\Lambda {}_\Lambda\Delta(i)\simeq \Delta_M(i)$   for each $i\in[1,t].$
\end{itemize}
\end{corollary}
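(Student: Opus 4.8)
The plan is to establish the five statements in order, using the canonical exact sequences from Corollary \ref{CorApplT1} together with the hypothesis $M\in\F(\Delta_M)$ as the crucial new input. For part (a), I would argue that $M_i\in\F(\Delta_M)$, and by Corollary \ref{CorApplT1}(a) we have the canonical sequence $0\to\Tr_{\overline{M}_i}(M_i)\to M_i\to\Delta_M(i)\to 0$; since $\F(\Delta_M)$ is closed under the kernels of epimorphisms between its objects that become surjections onto a single $\Delta_M(i)$ (this needs the standard fact that $\F(\Theta)$ is closed under such kernels, which holds for any stratifying system — I would cite the relevant result from \cite{Erdmann2003, Marcos2004}), we get $\Tr_{\overline{M}_i}(M_i)\in\F(\Delta_M)$. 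Then I would upgrade this to a filtration using only $\Delta_M(j)$ with $j>i$: because $\Tr_{\overline{M}_i}(M_i)\in\Fac(\overline{M}_i)$ by Corollary \ref{CorApplT1}(a), and the torsion class $\Fac(\overline{M}_i)$ meets $\F(\Delta_M)$ only in $\F(\Delta_M(j):j>i)$ — this last claim being exactly the compatibility between the torsion pair filtration and the $\Delta$-filtration, which I expect to be the main obstacle of the whole corollary.

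For part (b), conditions (EPSS1) and (EPSS2) are immediate: (EPSS1) is condition (SS1) from Theorem \ref{thm:taustratsys}(a), and (EPSS2) is the canonical sequence of Corollary \ref{CorApplT1}(a) together with part (a) just proved, giving $K(i)=\Tr_{\overline{M}_i}(M_i)\in\F(\Delta_M(j):j>i)$. For (EPSS3), I would note that $M=\bigoplus_{i=1}^t M_i$ is Ext-projective in $\Fac(M)$ by \cite[Corollary 5.9]{Auslander1981}, and $\F(\Delta_M)\subseteq\Fac(M)$ by Corollary \ref{CorApplT1}(c); hence $\Ext^1_A(M,X)=0$ for all $X\in\F(\Delta_M)$, which is precisely (EPSS3) with $Q(i)=M_i$.

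For parts (c), (d) and (e), I would invoke the general theory of Ext-projective stratifying systems recalled at the end of Section \ref{sec:Background1}: given the Ext-projective stratifying system $(\Delta_M,\{M_i\},\leq)$ from part (b), the results of \cite{Erdmann2003, Marcos2004} give that $\Lambda=\End_A(\bigoplus_{i=1}^t M_i)^{op}$ is standardly stratified and that $\F(\Delta_M)$ is equivalent to $\F({}_\Lambda\Delta)$. I would make this explicit by checking that ${}_\Lambda P(i):=\Hom_A(M,M_i)$ are the indecomposable projective $\Lambda$-modules (since $\Hom_A(M,-)$ sends $\add(M)$ equivalently onto $\proj(\Lambda)$), that the functor $\Hom_A(M,-)$ restricted to $\F(\Delta_M)$ is full, faithful and dense with quasi-inverse $M\otimes_\Lambda-$ (using that $M$ is Ext-projective in $\Fac(M)\supseteq\F(\Delta_M)$, so the adjunction unit and counit are isomorphisms on this subcategory), and that it carries the canonical sequence defining $\Delta_M(i)$ to the one defining ${}_\Lambda\Delta(i)$, which simultaneously yields (e). The delicate point in (d) is verifying that $M\otimes_\Lambda-$ lands in $\F(\Delta_M)$ and is inverse to $\Hom_A(M,-)$ there; I would handle this by induction on the length of a $\Delta$-filtration, the base case being the isomorphisms of part (e) and the inductive step using exactness of both functors on the short exact sequences appearing in a $\Delta$-filtration, which are available because all the relevant $\Ext^1$-groups vanish by Ext-projectivity.
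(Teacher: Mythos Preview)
Your overall strategy---prove (a) directly, deduce (b), then invoke the general theory of Ext-projective stratifying systems for (c)--(e)---is coherent, but the step you yourself flag as ``the main obstacle'' is a genuine gap. You assert that $\Fac(\overline{M}_i)\cap\F(\Delta_M)\subseteq\F(\Delta_M(j):j>i)$ without proof. This is not a routine torsion-pair/filtration compatibility: the ordered $\Delta_M$-filtration you can produce (using (SS2) to swap adjacent factors) only lets you put the \emph{minimal} index at the bottom, giving a monomorphism $\Delta_M(m)\hookrightarrow X$; but $\Fac(\overline{M}_i)$ is not closed under submodules, so you cannot immediately derive a contradiction from $m\le i$. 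Likewise, trying to match the canonical torsion sequence with the ordered filtration runs into the fact that the two sequences go the ``wrong way'' (torsion-free sub versus torsion quotient), and the relevant $\Ext^1$-vanishing needed to flip them is exactly the direction not covered by (SS2). Since your (EPSS2) in part (b) rests on (a), this gap blocks the whole argument.

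The paper avoids this obstacle by reversing the order: it proves (b) first, and then (a) falls out for free. Concretely, it uses the general theory \cite{Marcos2005a} to produce \emph{some} Ext-projective stratifying system $(\Delta_M,\{Q(i)\},\leq)$ with $\add(Q)=\mathcal{P}(\Delta_M)\cap\F(\Delta_M)$. The hypothesis $M\in\F(\Delta_M)$ together with Ext-projectivity of $M$ in $\Fac(M)\supseteq\F(\Delta_M)$ (Corollary \ref{CorApplT1}(c)) places $M$ in $\add(Q)$; the rank equality $rk(Q)=t=rk(M)$ then forces $\add(Q)=\add(M)$. Now both $\gamma_i:Q(i)\to\Delta_M(i)$ (from the EPSS) and $\beta_i:M_i\to\Delta_M(i)$ (Corollary \ref{CorApplT1}(b)) are $\add(M)$-covers of the same object, hence the defining sequences are isomorphic. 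This gives (b) with $Q(i)\cong M_i$, and (a) follows immediately since $K(i)\cong\Tr_{\overline{M}_i}(M_i)$ by Corollary \ref{CorApplT1}(a). Parts (c)--(e) are then read off from \cite[Theorem 3.2]{Marcos2005a}, exactly as you propose.
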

\begin{proof} Let $M\in\F(\Delta_M).$ By Theorem \ref{thm:taustratsys} (a), we know that $(\Delta_M,\leq)$ is a stratifying system of size $t.$ Then, by \cite[Corollary 2.5, Proposition 2.14 (b)]{Marcos2005a} there is an Ext-projective stratifying system 
$(\Delta_M,\mathbf{Q}=\{Q(i)\}_{i=1}^t,\leq)$ and $\mathcal{P}(\Delta_M)\cap\F(\Delta_M)=\add(Q),$ where $Q:=\bigoplus_{i=1}^t\,Q(i)$ and $\mathcal{P}(\Delta_M)$ is formed by all of the $X\in\modu\,(A)$ such that $\Ext^1_A(X, \F(\Delta_M))=0.$ Since $M\in\F(\Delta_M),$ and $\Ext^1_A(M,\Fac(M))=0,$ we get from Corollary \ref{CorApplT1} (c) that $M\in\mathcal{P}(\Delta_M)\cap\F(\Delta_M)$ and so we get that $M\in\add\,(Q).$  Moreover,  \cite[Remark 2.7]{Marcos2005a} gives us  $rk(Q)=t=rk(M),$ and hence $\add(Q)=\add(M).$
\

Let $i\in[1,t].$ By Definition \ref{def:epss} and \cite[Lemma 2.3]{Marcos2005a}, there is an exact sequence 
$$0\to K(i)\to Q(i)\to \Delta_M(i)\to 0,$$ where $K(i)\in\F(\{\Delta_M(j)\;:\;j>i\})$ and $Q(i)\to \Delta_M(i)$ is an $\add(Q)$-cover of 
$\Delta_M(i).$ On the other hand, we have the canonical exact sequence 
$$0\to \t_{i+1}(M_i)\to M_i\xrightarrow{\beta_i} \Delta_M(i)\to 0$$ which is  given by the torsion pair 
$(\Fac(\overline{M}_i),\overline{M}_i^\perp).$ By Corollary \ref{CorApplT1} (b), we know that $\beta_i:M_i\to \Delta_M(i)$ is an 
$\add(M)$-cover of $\Delta_M(i).$ Using now that $\add(Q)=\add(M),$ it follows that the above two exact sequences are isomorphic. In particular we get (b), and (a) follows by Corollary \ref{CorApplT1} (a).
Once we have that $(\Delta_M,\{M_i\}_{i=1}^t,\leq)$ is an Ext-projective stratifying system in $\modu\,(A),$ the items (c), (d) and (e) follow from \cite[Theorem 3.2]{Marcos2005a}.
\end{proof}

\begin{definition}\label{tauTF} Let $A$ be an algebra.  An Ext-projective stratifying system 
$(\Theta,\{Q(i)\}_{i=1}^t,\leq)$ in $\modu\,(A),$ with the usual natural order on $[1,t],$ is said to be {\it $\tau$-torsion-free admissible} ($\tau$TF-admissible, for short) if $Q:=\bigoplus_{i=1}^t Q(i)$ is $\tau$-rigid and a TF-admissible decomposition. We denote by $\tau\mathrm{TFepss}(A)$ the class of all the Ext-projective stratifying systems which are $\tau$TF-admissible. We also consider the class 
$\mathrm{TFproper}(A)$ of all pairs $(M,\{M_i\}_{i=1}^r)$ such that $M\in\modu\,(A)$ is a non-zero basic $\tau$-rigid $A$-module and $M=\bigoplus_{i=1}^rM_i$ is a TF-admissible decomposition satisfying that $M\in\F(\Delta_M).$
\end{definition}

\begin{theorem}\label{CorApplT3} For any algebra $A,$ there are well defined functions
$$\tau\mathrm{TFepss}(A)\xrightarrow{\Upsilon}\mathrm{TFproper}(A)\xrightarrow{\Psi}\tau\mathrm{TFepss}(A),$$ 
where $\Upsilon(\Theta,\{Q(i)\}_{i=1}^t,\leq):=(\bigoplus_{i=1}^t Q(i), \{Q(i)\}_{i=1}^t)$ and 
$\Psi(M,\{M_i\}_{i=1}^r):=(\Delta_M,\{M_i\}_{i=1}^r,\leq).$ Moreover, for any $X\in\tau\mathrm{TFepss}(A)$ and $Y\in\mathrm{TFproper}(A),$ we have that $\Psi(\Upsilon( X))\simeq X$ and $\Upsilon(\Psi (Y))=Y.$
\end{theorem}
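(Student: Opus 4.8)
The plan is to establish the claimed bijection by unwinding the definitions on each side and invoking the two previous corollaries. First I would check that $\Upsilon$ is well defined: given $(\Theta,\{Q(i)\}_{i=1}^t,\leq)\in\tau\mathrm{TFepss}(A)$, by Definition \ref{tauTF} the module $Q=\bigoplus_{i=1}^tQ(i)$ is $\tau$-rigid with the given TF-admissible decomposition, and it is non-zero and basic since the $Q(i)$ are indecomposable and, by (EPSS2) together with (SS1), pairwise non-isomorphic (a repeated summand would force a nonzero endomorphism of some $\Theta(i)$ against the Hom-vanishing, or can be ruled out directly from the $\tau$-rigidity and the approximation description). To land in $\mathrm{TFproper}(A)$ I must also verify $Q\in\F(\Delta_Q)$. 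Here the key point is that the canonical sequences defining $\Delta_Q(i)$ from the torsion pair $(\Fac(\overline{Q}_i),\overline{Q}_i^\perp)$ agree, via Corollary \ref{CorApplT1}(b) and the uniqueness of minimal approximations/covers, with the sequences $0\to K(i)\to Q(i)\to\Theta(i)\to0$ coming from (EPSS2); this simultaneously shows $\Theta(i)\simeq\Delta_Q(i)$ for all $i$ and that each $Q(i)$, hence $Q$, lies in $\F(\Delta_Q)=\F(\Theta)$.

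Next I would check that $\Psi$ is well defined: this is essentially Corollary \ref{CorApplT2}(b), which says that for $(M,\{M_i\}_{i=1}^r)\in\mathrm{TFproper}(A)$ the triple $(\Delta_M,\{M_i\}_{i=1}^r,\leq)$ is an Ext-projective stratifying system, together with the observation that $M=\bigoplus_{i=1}^r M_i$ is $\tau$-rigid with a TF-admissible decomposition by hypothesis — so $\Psi(M,\{M_i\})\in\tau\mathrm{TFepss}(A)$ by Definition \ref{tauTF}. Then the two composites: for $Y=(M,\{M_i\}_{i=1}^r)\in\mathrm{TFproper}(A)$ we have $\Upsilon(\Psi(Y))=\Upsilon(\Delta_M,\{M_i\}_{i=1}^r,\leq)=(\bigoplus_i M_i,\{M_i\}_{i=1}^r)=(M,\{M_i\}_{i=1}^r)=Y$ on the nose, since the $M_i$ and the decomposition are literally preserved. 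For $X=(\Theta,\{Q(i)\}_{i=1}^t,\leq)\in\tau\mathrm{TFepss}(A)$ we compute $\Psi(\Upsilon(X))=\Psi(\bigoplus_i Q(i),\{Q(i)\}_{i=1}^t)=(\Delta_Q,\{Q(i)\}_{i=1}^t,\leq)$; the identification with $X$ then follows because, as noted in the previous paragraph, $\Delta_Q(i)\simeq\Theta(i)$ for all $i$ (this is an isomorphism of families, hence only $\simeq$, not equality), and the second and third components $\{Q(i)\}$ and $\leq$ are unchanged.

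The main obstacle I anticipate is the identification $\Theta(i)\simeq\Delta_Q(i)$ in the $\Psi\circ\Upsilon$ direction, i.e. showing that the intrinsic torsion-theoretic construction $\f_{i+1}(Q_i)$ recovers the given $\Theta(i)$. For this I would argue: by (EPSS2), $Q(i)\twoheadrightarrow\Theta(i)$ with kernel in $\F(\{\Theta(j):j>i\})$, and $\F(\{\Theta(j):j>i\})\subseteq\Fac(\overline{Q}_i)$ since each $\Theta(j)$ ($j>i$) is a quotient of $Q(j)$ by (EPSS2) again; conversely $\Theta(i)\in\overline{Q}_i^\perp$ because $\Hom_A(Q(j),\Theta(i))=0$ for $j>i$, which follows from applying $\Hom_A(-,\Theta(i))$ to the sequence $0\to K(j)\to Q(j)\to\Theta(j)\to0$ using (SS1) and (EPSS3) (the latter giving $\Ext^1_A(Q(j),K(j))=0$ since $K(j)\in\F(\Theta)$). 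Hence $0\to K(i)\to Q(i)\to\Theta(i)\to0$ is precisely the canonical sequence for the torsion pair $(\Fac(\overline{Q}_i),\overline{Q}_i^\perp)$, so $\Theta(i)\simeq\f_{i+1}(Q_i)=\Delta_Q(i)$ by uniqueness of the canonical sequence. Everything else is bookkeeping with the definitions, and I would state the well-definedness verifications compactly, citing Corollary \ref{CorApplT1} and Corollary \ref{CorApplT2} for the substantive content.
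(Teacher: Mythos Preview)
Your approach is correct and takes a genuinely different route to the key identification $\Theta(i)\simeq\Delta_Q(i)$. The paper argues this indirectly through the endomorphism algebra $\Lambda=\End_A(Q)^{op}$: it invokes \cite[Theorem~3.2(a)]{Marcos2005a} (applied to the given EPSS) to get $\Theta(i)\simeq Q\otimes_\Lambda{}_\Lambda\Delta(i)$, and Corollary~\ref{CorApplT2}(e) (applied to $(\Delta_Q,\{Q(i)\},\leq)$) to get $Q\otimes_\Lambda{}_\Lambda\Delta(i)\simeq\Delta_Q(i)$, then matches the two $\add(Q)$-covers $\gamma_i$ and $\beta_i$. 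Your argument is more elementary and self-contained: you show that the (EPSS2) sequence $0\to K(i)\to Q(i)\to\Theta(i)\to 0$ is literally the canonical sequence for the torsion pair $(\Fac(\overline{Q}_i),\overline{Q}_i^\perp)$, since $K(i)\in\Fac(\overline{Q}_i)$ and $\Theta(i)\in\overline{Q}_i^\perp$, and then uniqueness of the canonical sequence gives the isomorphism directly. What your route buys is that it establishes $Q\in\F(\Delta_Q)$ (and hence the well-definedness of $\Upsilon$) as an immediate by-product, whereas the paper's appeal to Corollary~\ref{CorApplT2}(e) tacitly presupposes this very fact as a hypothesis.

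One small repair: in your check that $\Theta(i)\in\overline{Q}_i^\perp$, the appeal to (EPSS3) is not what you need. Applying $\Hom_A(-,\Theta(i))$ to $0\to K(j)\to Q(j)\to\Theta(j)\to 0$ for $j>i$ gives an exact sequence in which $\Hom_A(Q(j),\Theta(i))$ sits between $\Hom_A(\Theta(j),\Theta(i))$ and $\Hom_A(K(j),\Theta(i))$; both vanish by (EPSS1) alone, the latter because $K(j)\in\F(\{\Theta(k):k>j\})$ and each such $\Theta(k)$ has $k>i$. No Ext-vanishing is required here.
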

\begin{proof} Let $Y\in\mathrm{TFproper}(A).$ By Corollary \ref{CorApplT2} (b), we get that $\Psi(Y)\in\tau\mathrm{TFepss}(A).$ Moreover, it is clear that $\Upsilon(\Psi(Y))=Y.$

Let $X:=(\Theta,\{Q(i)\}_{i=1}^t,\leq)$ be an Ext-projective stratifying system in $\modu\,(A).$ By 
\cite[Remark 2.7]{Marcos2005a}, we know that all the elements of the family $\{Q(i)\}_{i=1}^t$ are pairwise non-isomorphic. Let 
$Q:=\bigoplus_{i=1}^t Q(i)$ be $\tau$-rigid and a TF-admissible decomposition. Then, we have the Ext-projective stratifying system 
$(\Delta_Q, \{Q(i)\}_{i=1}^t,\leq)=\Psi(\Upsilon( X)).$ By Definition \ref{def:epss}, \cite[Lemma 2.3]{Marcos2005a} and Corollary \ref{CorApplT1}, we have the canonical exact sequences
$$0\to K(i)\to Q(i)\xrightarrow{\gamma_i}\Theta(i)\to 0\quad\text{and}\quad 0\to \t_{i+1}(Q(i))\to Q(i)\xrightarrow{\beta_i} \Delta_Q(i)\to 0,$$ 
where $\beta_i$ and $\gamma_i$ are both $\add(Q)$-covers. Let $\Lambda:=\End_A(Q)^{op}.$  Then, by 
\cite[Theorem 3.2 (a)]{Marcos2005a} and Corollary \ref{CorApplT2} (e), it follows that
$\Theta(i)\simeq Q\otimes_\Lambda {}_\Lambda\Delta(i)\simeq{}_Q\Delta(i),$ and thus we get an isomorphism 
$\varepsilon_i:\Theta(i)\xrightarrow{\sim}\Delta_Q(i)$ for each $i\in[1,t].$ Therefore, $\varepsilon_i\gamma_i:Q(i)\to \Delta_Q(i)$ is 
an $\add(Q)$-cover,  for each $i\in[1,t].$ Hence, for each $i,$ there exists an isomorphism $\overline{\varepsilon}_i:Q(i)\to Q(i)$ such 
that $\varepsilon_i\gamma_i=\beta_i\overline{\varepsilon}_i,$ proving that $\Psi(\Upsilon( X))\simeq X$ \cite[Definition 2.4]{Marcos2005a}
\end{proof}

It is mentioned in \cite{AIR} that $\tau$-tilting can be dualised by considering $\tau^-$-rigid objects, that is objects $N \in \mod(A)$ such that 
$\Hom_A(\tau^- N, N)=0,$ where $\tau^-$ is the inverse of the Auslander-Reiten translation $\tau.$
As a consequence, all the results in this section can be dualised as well.

\section{Perpendicular categories and signed $\tau$-exceptional sequences}\label{sec:Background2}

In this second section of background, we recall the process of $\tau$-tilting reduction introduced by G. Jasso in \cite{Jasso2015} and the definition of signed $\tau$-exceptional sequences introduced by A. B. Buan and R. Marsh in \cite{BM}.
This will allow us to compare in Section \ref{sec:comparison} the stratifying systems arising from Theorem \ref{thm:taustratsys} with the signed exceptional sequences of \cite{BM}.

Let $M\in\modu\,(A)$ be a non-zero basic $\tau$-rigid module and let $\B_M\in\modu\,(A)$ be the Bongartz completion of $M$.
Following \cite{Jasso2015}, we consider the algebras 
$$B_{M}:=\text{End}_A(\B_M)^{op}\quad\text{and}\quad C_{M}:=B_{M}/\langle e_{M}\rangle,$$
where $e_{M}$ is the idempotent associated to the $B_{M}$-projective module $\Hom_{A}(\B_M, M)$.
We regard $\modu\,(C_M)$ as a full subcategory of $\modu\,(B_M)$ via the canonical embedding.
The Jasso's subcategory $J(M)$ associated with the $\tau$-rigid module $M$ is $$J(M):=M^\perp \cap\; ^\perp(\tau M).$$
We now state \cite[Theorem 3.8]{Jasso2015}.

\begin{theorem}\label{thm:tautiltred}
Let $M$ be a basic $\tau$-rigid module in $\modu\,(A)$.  
Then the functor 
$$F:=\Hom_A (\B_M, -): J(M)  \to \modu\,(C_M)$$ 
is an equivalence of categories with a quasi-inverse given by 
$$G:=\B_M\otimes_{B_M}-:\modu\,(C_M)\to J(M).$$ 
\end{theorem}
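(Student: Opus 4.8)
This is \cite[Theorem~3.8]{Jasso2015}; the plan is to reduce the statement to the tilting-theoretic behaviour of the $\tau$-tilting module $\B_M$ on the torsion class it generates, and then to cut out the subcategory $J(M)$ by means of the idempotent $e_M$. Write $U:=\B_M$ and $B:=B_M=\End_A(U)^{op}$. By Theorem~\ref{prop:Bongartz}, $U$ is a $\tau$-tilting module having $M$ as a direct summand, and $\Fac(U)={}^\perp(\tau M)$, so that $J(M)=M^\perp\cap{}^\perp(\tau M)=M^\perp\cap\Fac(U)$ is a full subcategory of the torsion class $\Fac(U)$. Moreover $F=\Hom_A(U,-)$ has the left adjoint $G=U\otimes_B-$, and $F$ restricts to the usual equivalence $\add(U)\xrightarrow{\sim}\proj(B)$.

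First I would establish the tilting equivalence attached to the torsion pair $(\Fac(U),U^\perp)$. Since $M$ is $\tau$-rigid so is $U$, hence $U$ is Ext-projective in $\Fac(U)$; this is exactly the input needed for the ($\tau$-tilting version of the) Brenner--Butler theorem, which gives that $F$ restricts to a fully faithful functor $\Fac(U)\to\modu\,(B)$ whose essential image is a torsion-free class $\mathcal Y\subseteq\modu\,(B)$, with quasi-inverse the restriction of $G$; concretely, the counit $GF(X)\to X$ is an isomorphism for $X\in\Fac(U)$ and the unit $Y\to FG(Y)$ is an isomorphism for $Y\in\mathcal Y$.

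Next I would cut out $J(M)$. For every $A$-module $X$, the idempotent $e_M\in B$ attached to the summand $M$ of $U$ satisfies $e_M\cdot F(X)\cong\Hom_A(M,X)$; hence $X\in M^\perp$ if and only if $F(X)$ lies in $\modu\,(C_M)$, the full subcategory of $\modu\,(B)$ of modules killed by $e_M$ (recall $C_M=B/\langle e_M\rangle$). Combining this with the first step, $F$ restricts to an equivalence $J(M)\xrightarrow{\sim}\mathcal Y\cap\modu\,(C_M)$ whose quasi-inverse is the restriction of $G$; and for $Y\in\mathcal Y\cap\modu\,(C_M)$ one has $\Hom_A(M,G(Y))\cong e_M\cdot FG(Y)\cong e_M Y=0$, so $G(Y)\in M^\perp\cap\Fac(U)=J(M)$. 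Thus the whole theorem reduces to the single inclusion $\modu\,(C_M)\subseteq\mathcal Y$.

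Proving $\modu\,(C_M)\subseteq\mathcal Y$ is the step I expect to be the main obstacle. It is equivalent to showing that for every $C_M$-module $Y$ the unit $Y\to\Hom_A(U,U\otimes_B Y)$ is an isomorphism, that is, that applying $U\otimes_B-$ to a projective presentation of $Y$ over $B$ yields an exact sequence exhibiting $Y$ (a $\mathrm{Tor}$-vanishing condition). The route is to exploit the description $C_M=\End_A(U)^{op}/\langle e_M\rangle$ in order to identify projective $C_M$-modules with $\add(U)$-approximations of objects of $J(M)$, to compute with a projective $C_M$-presentation of $Y$, and to match the outcome against the tilting equivalence of the first step, using that $\mathcal Y$ contains $\proj(B)$ and is closed under extensions. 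Carrying out this bookkeeping — matching projective resolutions over the quotient algebra $C_M$ with the homological structure of the torsion class $\Fac(U)$ — is the technical heart of the statement, and is done in detail in \cite{Jasso2015}.
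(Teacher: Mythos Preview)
The paper does not prove this statement at all: Theorem~\ref{thm:tautiltred} is quoted verbatim as background from \cite[Theorem~3.8]{Jasso2015}, with no argument supplied. Your proposal correctly identifies this citation, so as far as matching the paper goes you are done in the first sentence.

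The sketch you add is a reasonable outline of Jasso's strategy, but one point deserves care. You invoke a ``$\tau$-tilting version of the Brenner--Butler theorem'' asserting that $F=\Hom_A(U,-)$ restricts to an equivalence from $\Fac(U)$ onto a torsion-free class $\mathcal Y\subseteq\modu(B)$. For a classical tilting module this is standard, but a $\tau$-tilting module need not have projective dimension at most one, so Brenner--Butler does not apply verbatim and the essential image of $F\vert_{\Fac(U)}$ is not in general a torsion-free class of $\modu(B)$. What you actually use --- and what Jasso establishes directly --- is the weaker fact that $F$ is fully faithful on $\Fac(U)$ (this follows from Ext-projectivity of $U$ in $\Fac(U)$, so the counit $U\otimes_B\Hom_A(U,X)\to X$ is an isomorphism for $X\in\Fac(U)$). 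With that in hand, the reduction to identifying the essential image of $F\vert_{J(M)}$ with $\modu(C_M)$ proceeds as you describe, and the ``main obstacle'' you flag (essential surjectivity, i.e.\ that every $C_M$-module is of the form $F(X)$ for some $X\in J(M)$) is indeed the technical core of \cite[\S3]{Jasso2015}. So your plan is sound once the Brenner--Butler appeal is replaced by the direct full-faithfulness argument.
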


The previous result implies the existance of relative $\tau$-rigid objects in the Jasso's subcategory $J(M)$.
The following result, which appears in \cite[Proposition 3.15]{Jasso2015}, shows how to find them all. 

\begin{proposition}\label{prop:redtaurigid}
Let $M$ and $M'$ be two $A$-modules such that $M\oplus M'$ is $\tau$-rigid, $(\Fac(M), M^\perp)$ be the torsion pair associated to $M$ and let 
$$0 \to \t (M') \to M' \to \f(M') \to 0$$
be the canonical short exact sequence of $M'$ with respect to $(\Fac(M), M^\perp)$. 
Then $\f(M')$ is a $\tau$-rigid object in $J(M)$.
Moreover, every $\tau$-rigid object in $J(M)$ arises this way.
\end{proposition}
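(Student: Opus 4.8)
The statement to prove is Proposition~\ref{prop:redtaurigid}, which I'll treat as known (it is Jasso's \cite[Proposition 3.15]{Jasso2015}). So what I actually want to sketch is how I would \emph{prove} it, mirroring Jasso's argument via the equivalence of Theorem~\ref{thm:tautiltred}. The idea is to transport the notion of $\tau$-rigidity across the equivalence $F = \Hom_A(\B_M,-) \colon J(M) \to \modu(C_M)$ and use the fact that, over the reduced algebra $C_M$, the relevant objects are honestly $\tau$-rigid $C_M$-modules; then pull back the standard completion theory. First I would fix $M$ and its Bongartz completion $\B_M$, recall that $\Fac(\B_M) = {}^\perp(\tau M)$ (Theorem~\ref{prop:Bongartz}), and note that the torsion pair $(\Fac(M), M^\perp)$ restricted to ${}^\perp(\tau M)$ has torsion-free class exactly $M^\perp \cap {}^\perp(\tau M) = J(M)$. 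Thus for $M'$ with $M \oplus M'$ $\tau$-rigid we have $M' \in {}^\perp(\tau M)$, and its canonical sequence $0 \to \t(M') \to M' \to \f(M') \to 0$ lives entirely inside ${}^\perp(\tau M)$, with $\f(M') \in J(M)$; this already gives the first assertion's domain but not yet $\tau$-rigidity.

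\emph{Key steps, in order.} (1) Show $\f(M')$ is $\tau$-rigid \emph{relative to} $J(M)$ — i.e. that it has no self-extensions that obstruct completion inside $J(M)$ — by translating through $F$. Concretely: $M \oplus M'$ $\tau$-rigid implies $M'$ is Ext-projective in $\Fac(M \oplus M') \subseteq {}^\perp(\tau M)$, so $\Ext^1_A(M', \t(M')) = 0$ since $\t(M') \in \Fac(M)$; hence the canonical sequence for $M'$ is also a minimal $\add(M)$-approximation sequence (this is exactly the kind of argument used in Corollary~\ref{CorApplT1}(b)), and $\f(M') = \coker$ of the $\add(M)$-approximation. (2) Apply $F$: since $F$ is exact on $J(M)$-sequences and sends $\B_M$ to the projective generator, $F(\f(M'))$ becomes a module over $C_M = B_M/\langle e_M\rangle$, and the Ext-projectivity properties of $M'$ translate into $F(\f(M'))$ being $\tau$-rigid as a $C_M$-module. (3) For the "moreover", go backwards: given any $\tau$-rigid $N \in J(M)$, set $N' := G(\text{(a } \tau\text{-rigid lift over } C_M)) $ — but more directly, take the $C_M$-module $F(N)$, lift it to a $\tau$-rigid $B_M$-module, pull back along the projection $\modu(C_M) \hookrightarrow \modu(B_M)$, and finally apply $\Hom_{B_M}(-, \text{something})$... actually the cleanest route is: every $\tau$-rigid $C_M$-module is a direct summand of a $\tau$-tilting $C_M$-module, and via the bijection between support $\tau$-tilting $C_M$-modules and support $\tau$-tilting $A$-modules containing $M$ as a summand (the $\tau$-tilting reduction bijection), one recovers the required $M'$ with $M \oplus M'$ $\tau$-rigid and $\f(M') \cong N$.

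\emph{The main obstacle.} The delicate point is step~(2)–(3): making precise that "$\tau$-rigid in $J(M)$" (an intrinsic, relative notion — no self-extensions and completability inside the subcategory $J(M)$) corresponds under $F$ exactly to "$\tau$-rigid $C_M$-module", and then that the passage $M' \mapsto \f(M')$ is compatible with the $\tau$-tilting reduction bijection on support $\tau$-tilting objects. Ext-groups computed in the exact category $J(M)$ need not agree with those computed in $\modu(A)$ a priori, so one must check that $J(M)$ is an extension-closed subcategory of $\modu(A)$ (it is, being an intersection of a torsion-free class and a torsion class) and that $F$ is exact for these extensions — this is where Theorem~\ref{thm:tautiltred} does the heavy lifting. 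Once the dictionary $J(M) \leftrightarrow \modu(C_M)$ is set up compatibly with short exact sequences and with the reduction of support $\tau$-tilting modules, both halves of the proposition follow: $\f(M')$ is $\tau$-rigid because $F(\f(M'))$ is, and conversely every $\tau$-rigid $N \in J(M)$ equals $\f(M')$ for the $M'$ produced by the reduction bijection applied to a $\tau$-tilting completion of $F(N)$.
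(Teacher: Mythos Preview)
The paper does not prove this proposition at all: it is stated in Section~\ref{sec:Background2} as background material, with the explicit attribution ``which appears in \cite[Proposition 3.15]{Jasso2015}.'' There is no proof to compare against --- the paper simply cites Jasso's result and uses it as a black box.

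You correctly identify this in your first sentence, so there is no discrepancy. Your subsequent sketch of how Jasso's argument runs (transport via the equivalence $F\colon J(M)\to \modu(C_M)$, reduce to ordinary $\tau$-rigidity over $C_M$, and invert via the $\tau$-tilting reduction bijection) is a reasonable outline of the strategy in \cite{Jasso2015}, but it is entirely extra relative to what the present paper contains. If the intent of your proposal was to match the paper, the correct ``proof'' here is one line: cite \cite[Proposition~3.15]{Jasso2015}.
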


In order to define the concept of signed $\tau$-exceptional sequences we need to introduce some notation.

\begin{definition}\cite[Definitions 1.1]{BM}
Let $A$ be an algebra and consider its bounded derived category $D^b(\modu\,(A)).$ 
Define $\mathcal{C}(A)$ to be the full subcategory of $D^b(\modu\,(A))$
$$\mathcal{C}(A):= \modu\,(A) \sqcup \modu\,(A) [1] $$
whose objects are the disjoint union of the objects of the module category $\modu\,(A)$ and its shift $\modu\,( A)[1]$.
Likewise, for every subcategory $\X$ of $\modu\,(A)$, define $\mathcal{C}(\X)$ to be
$\mathcal{C}(\X):= \X \sqcup \X [1].$
An object $\mathcal{M}= M \sqcup P[1]$ in $\mathcal{C}(A)$ is said to be $\tau$-rigid if $M$ is $\tau$-rigid in $\modu\,(A),$ $P\in\proj(A)$ and $\Hom_A (P,M)=0.$ If in addition we have that $rk(M)+rk(P)=rk({}_AA),$ we say that  $\mathcal{M}$ is  support $\tau$-tilting. 
\end{definition}

\begin{remark}\label{RkJasso} 
Let $A$ be an algebra and $\mathcal{M}= M \sqcup P[1]\in\mathcal{C}(A)$ be a basic $\tau$-rigid object.  
The Jasso's subcategory for $\mathcal{M}$ is $J(\mathcal{M}):=J(M)\cap J(P)$.
Note that Theorem \ref{thm:tautiltred} can be extended to this case, and thus $J(\mathcal{M})$ is a wide subcategory of $\modu\,(A)$ which is equivalent to a module category of an algebra \cite[Theorem 4.12]{DIRRT2018}.
\end{remark}

With this notation fixed, we can now recall the definition of signed $\tau$-exceptional sequences.

\begin{definition}\cite[Definition 1.3]{BM}
Let $A$ be an algebra.  
A $t$-tuple of indecomposable objects $\left( \mathcal{U}_1, \dots, \mathcal{U}_t \right)$ in $\mathcal{C}(A)$ is a \textit{signed $\tau$-exceptional sequence} if $\mathcal{U}_t$ is a $\tau$-rigid object in $\mathcal{C}(A)$ and the tuple $\left( \mathcal{U}_1, \dots, \mathcal{U}_{t-1} \right)$ is a signed $\tau$-exceptional sequence in $\mathcal{C}\left(  J(\mathcal{U}_t)\right)$.
\end{definition}

\begin{remark} 
For a signed $\tau$-exceptional sequence $\left( \mathcal{U}_1, \dots, \mathcal{U}_t \right)$ in $\mathcal{C}(A),$ we have that $J(\U_t)$ is equivalent to the module category of an algebra $A_t,$ allowing the recursive nature of the definition.
Now, $\U_{t-1}$ is a $\tau$-rigid object inside $\C(J(\U_t))=J(\U_t)\sqcup J(\U_t)[1].$
Hence one can calculate the perpendicular category of $\U_{t-1}$ inside $\C(J(\U_t))$, and thus we get $J(\U_{t-1})$.
Proceeding inductively, we have that every signed $\tau$-exceptional sequence $\left( \mathcal{U}_1, \dots, \mathcal{U}_{t} \right)$ induces a set $\left\{J(\U_1), \dots ,J(\U_t)\right\}$ of nested wide subcategories of $\modu\,(A)$.
\end{remark}

\begin{definition}\cite[Definition 1.2]{BM}
Let $A$ be an algebra.  A $t$-tuple of indecomposable objects $\left( \mathcal{T}_1, \dots, \mathcal{T}_t \right)$ in $\mathcal{C}(A)$ is an ordered $\tau$-rigid object  if $\coprod_{i=1}^t\mathcal{T}_i$ is a $\tau$-rigid object.
\end{definition}

\begin{theorem}\cite[Theorem 5.4]{BM}\label{thm:bijection}
Let $A$ be an algebra and $n:=rk({}_AA).$ 
Then for every $t \in [1,n]$ there is a bijection between the set of ordered $\tau$-rigid objects in $\mathcal{C}(A)$ having $t$ non-isomorphic indecomposable summands and the set of signed $\tau$-exceptional sequences of length $t$.
\end{theorem}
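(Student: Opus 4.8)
The plan is to prove the statement by induction on $t$, constructing an explicit pair of mutually inverse maps out of the $\tau$-tilting reduction machinery: Jasso's equivalence $J(\mathcal{N})\simeq\modu\,(A')$ of Theorem~\ref{thm:tautiltred}, extended to objects of $\mathcal{C}(A)$ as in Remark~\ref{RkJasso}, together with the description of the relative $\tau$-rigid objects of $J(\mathcal{N})$ given by Proposition~\ref{prop:redtaurigid}. It is convenient to phrase the inductive hypothesis over the class of all finite dimensional algebras, so that it applies to the algebra $A_t$ with $\modu\,(A_t)\simeq J(\mathcal{N})$ and $rk(A_t)=rk({}_AA)-rk(\mathcal{N})$ attached by Remark~\ref{RkJasso} to a $\tau$-rigid object $\mathcal{N}\in\mathcal{C}(A)$. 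The base case $t=1$ is immediate: an ordered $\tau$-rigid object with one non-isomorphic indecomposable summand is just an indecomposable $\tau$-rigid object $\mathcal{U}_1$ of $\mathcal{C}(A)$, and a signed $\tau$-exceptional sequence of length $1$ is precisely such a $\mathcal{U}_1$ (the one-tuple $(\mathcal{U}_1)$ together with the empty signed $\tau$-exceptional sequence in $\mathcal{C}(J(\mathcal{U}_1))$), so the identity is the required bijection.

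\emph{The forward map.} Given an ordered $\tau$-rigid object $(\mathcal{T}_1,\dots,\mathcal{T}_t)$ of $\mathcal{C}(A)$, set $\mathcal{U}_t:=\mathcal{T}_t$. Since $\coprod_{i=1}^{t}\mathcal{T}_i$ is $\tau$-rigid, each $\mathcal{T}_i$ with $i<t$ is an indecomposable $\tau$-rigid object compatible with $\mathcal{T}_t$ and not in $\add\,(\mathcal{T}_t)$; by the $\mathcal{C}(A)$-version of Proposition~\ref{prop:redtaurigid} it reduces to a nonzero indecomposable $\tau$-rigid object $\mathcal{T}_i^{\flat}$ of $\mathcal{C}(J(\mathcal{T}_t))$. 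Reduction is additive, so $\coprod_{i<t}\mathcal{T}_i^{\flat}$ is the reduction of the $\tau$-rigid object $\coprod_{i<t}\mathcal{T}_i$ and is itself $\tau$-rigid, and the $\mathcal{T}_i^{\flat}$ are pairwise non-isomorphic because reduction is injective on isomorphism classes. Thus $(\mathcal{T}_1^{\flat},\dots,\mathcal{T}_{t-1}^{\flat})$ is an ordered $\tau$-rigid object of $\mathcal{C}(J(\mathcal{T}_t))\simeq\mathcal{C}(A_t)$ with $t-1$ non-isomorphic indecomposable summands; the inductive hypothesis turns it into a signed $\tau$-exceptional sequence $(\mathcal{U}_1,\dots,\mathcal{U}_{t-1})$ in $\mathcal{C}(J(\mathcal{U}_t))$, and we put $\Phi(\mathcal{T}_1,\dots,\mathcal{T}_t):=(\mathcal{U}_1,\dots,\mathcal{U}_t)$. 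This is a signed $\tau$-exceptional sequence, since $\mathcal{U}_t$ is $\tau$-rigid in $\mathcal{C}(A)$ and $(\mathcal{U}_1,\dots,\mathcal{U}_{t-1})$ is a signed $\tau$-exceptional sequence in $\mathcal{C}(J(\mathcal{U}_t))$.

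\emph{The backward map.} Given a signed $\tau$-exceptional sequence $(\mathcal{U}_1,\dots,\mathcal{U}_t)$, set $\mathcal{T}_t:=\mathcal{U}_t$. By definition $(\mathcal{U}_1,\dots,\mathcal{U}_{t-1})$ is a signed $\tau$-exceptional sequence in $\mathcal{C}(J(\mathcal{U}_t))\simeq\mathcal{C}(A_t)$, so by the inductive hypothesis it comes from an ordered $\tau$-rigid object $(\mathcal{V}_1,\dots,\mathcal{V}_{t-1})$ of $\mathcal{C}(J(\mathcal{U}_t))$ with $t-1$ non-isomorphic indecomposable summands. Lifting each $\mathcal{V}_i$ along the quasi-inverse of the reduction (Theorem~\ref{thm:tautiltred}, extended to $\mathcal{C}(A)$) gives indecomposable $\tau$-rigid objects $\mathcal{T}_1,\dots,\mathcal{T}_{t-1}$ of $\mathcal{C}(A)$ that are pairwise non-isomorphic and satisfy, by Proposition~\ref{prop:redtaurigid}, that $\coprod_{i=1}^{t}\mathcal{T}_i=\mathcal{T}_t\sqcup\coprod_{i<t}\mathcal{T}_i$ is $\tau$-rigid. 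We put $\Psi(\mathcal{U}_1,\dots,\mathcal{U}_t):=(\mathcal{T}_1,\dots,\mathcal{T}_t)$, an ordered $\tau$-rigid object with $t$ non-isomorphic indecomposable summands. That $\Phi$ and $\Psi$ are mutually inverse follows by the same induction: the top entry is preserved on the nose ($\mathcal{U}_t=\mathcal{T}_t$), and below the top entry one combines the inductive bijection for $\mathcal{C}(A_t)$ with the fact that reducing and lifting are mutually inverse on isomorphism classes.

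\emph{The main obstacle.} Everything above rests on one technical statement — the \emph{reduction lemma} — which I would isolate and prove first: for a basic $\tau$-rigid object $\mathcal{N}\in\mathcal{C}(A)$, $\tau$-tilting reduction induces a bijection, on isomorphism classes, between basic $\tau$-rigid objects of $\mathcal{C}(A)$ having $\mathcal{N}$ as a direct summand and basic $\tau$-rigid objects of $\mathcal{C}(J(\mathcal{N}))$, under which $\mathcal{N}\sqcup\mathcal{N}'$ goes to the reduction of $\mathcal{N}'$ and $rk(\mathcal{N}\sqcup\mathcal{N}')=rk(\mathcal{N})+rk(\text{reduction of }\mathcal{N}')$; in particular it restricts to a bijection between indecomposable $\tau$-rigid objects of $\mathcal{C}(A)$ compatible with $\mathcal{N}$ but not in $\add\,(\mathcal{N})$, and indecomposable $\tau$-rigid objects of $\mathcal{C}(J(\mathcal{N}))$. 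The natural route is to establish this first for \emph{support} $\tau$-tilting objects — there it is Jasso's reduction theorem in the form valid for $\mathcal{C}(A)$, and the rank count is forced because a support $\tau$-tilting object of $\mathcal{C}(A)$ has exactly $rk({}_AA)$ non-isomorphic indecomposable summands while $J(\mathcal{N})$ is a module category of rank $rk({}_AA)-rk(\mathcal{N})$ — and then to descend to arbitrary $\tau$-rigid objects using Bongartz completion inside $\mathcal{C}(A)$ (Theorem~\ref{prop:Bongartz}) together with the fact that direct summands of support $\tau$-tilting objects are $\tau$-rigid; functoriality of the reduction equivalence $F$ of Theorem~\ref{thm:tautiltred} then gives that indecomposables reduce to indecomposables and non-isomorphic objects to non-isomorphic ones. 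I expect the most delicate points to be the rank bookkeeping under iterated reductions, the check that the summand-by-summand reduction used in $\Phi$ really coincides with the reduction of the whole object $\coprod_{i<t}\mathcal{T}_i$, and the verification that the bijection transported to $\mathcal{C}(J(\mathcal{T}_t))$ does not depend on the auxiliary equivalence $J(\mathcal{T}_t)\simeq\modu\,(A_t)$ — this last point being what makes the recursion well defined.
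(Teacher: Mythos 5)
This statement is not proved in the paper at all: it is quoted verbatim from \cite[Theorem 5.4]{BM}, so there is no internal argument to compare against. Your strategy --- induction on $t$ carried out uniformly over all finite dimensional algebras, with the forward and backward maps built from $\tau$-tilting reduction, and everything resting on a bijection between basic $\tau$-rigid objects of $\mathcal{C}(A)$ having a fixed $\tau$-rigid $\mathcal{N}$ as a direct summand and basic $\tau$-rigid objects of $\mathcal{C}(J(\mathcal{N}))$ --- is precisely the route taken in \cite{BM}: your ``reduction lemma'' is their key theorem, and the recursion is their Section 5.

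The one place your sketch underestimates the difficulty is inside that lemma. Proposition~\ref{prop:redtaurigid} sends a summand $M'$ to its torsion-free quotient $\f(M')$, and this is zero exactly when $M'\in\Fac(\mathcal{N})$, which can happen for an indecomposable summand not in $\add(\mathcal{N})$ (e.g.\ $N=P_1$, $M'=S_1$ over the path algebra of $1\to 2$), and $\f$ does not even apply to the shifted summands $P[1]$. So the ``$\mathcal{C}(A)$-version'' is not a formal extension of Jasso's statement: the bijection must send such summands to shifted projective objects of $\mathcal{C}(J(\mathcal{N}))$, and conversely must lift shifted projectives of the reduced category back to honest modules or shifted projectives of $\mathcal{C}(A)$. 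Your assertion that each $\mathcal{T}_i$ with $i<t$ reduces to a \emph{nonzero} indecomposable object of $\mathcal{C}(J(\mathcal{T}_t))$ therefore requires this mechanism, not merely additivity of the reduction and a rank count. Building it --- via support $\tau$-tilting objects, Bongartz completion in $\mathcal{C}(A)$, and checking independence of the auxiliary equivalence $J(\mathcal{T}_t)\simeq\modu\,(A_t)$, all of which you correctly flag --- is the actual content of \cite[Sections 4--5]{BM}; once that lemma is in place, the rest of your argument goes through as written.
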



\section{Stratifying systems and signed $\tau$-exceptional sequences}\label{sec:comparison}

In this section we show that every stratifying system that is produced by using Theorem~\ref{thm:taustratsys} can be seen as a signed $\tau$-exceptional sequence.
Moreover, we characterise all the signed $\tau$-exceptional sequences that come from such stratifying systems.
Our main result of this section is the following. 

\begin{theorem}\label{thm:ss-are-taues}
Let $A$ be an algebra and $M\in\modu\,(A)$ be a non-zero basic $\tau$-rigid $A$-module and let $\Delta_M$ be the $M$-standard system associated with the given TF-admissible decomposition $M= \bigoplus_{i=1}^t M_i$ of $M$.
Then, the $t$-tuple $\U_M:=(\U_M(1),\ldots,\U_M(t))$, where $\U_M(i):=\Delta_M(i)\sqcup 0$ $\forall\,i\in[1,t],$ is a signed $\tau$-exceptional sequence of length $t$ in $\mathcal{C}(A)$.
Moreover, every signed $\tau$-exceptional sequence $\U=(\U(1),\ldots,\U(t))$ in $\C(A)$ where $\U(i)=U(i)\sqcup 0$ and $U(i)\in\modu\,(A)$ $\forall\,i\in[1,t]$ arises this way.
\end{theorem}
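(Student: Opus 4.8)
The proof has two halves: first, show that $\U_M$ is a signed $\tau$-exceptional sequence; second, show that every signed $\tau$-exceptional sequence $\U=(\U(1),\ldots,\U(t))$ with all components of the form $U(i)\sqcup 0$ arises from some $M$-standard system. For the first half, the natural strategy is induction on $t$, exploiting the recursive definition of a signed $\tau$-exceptional sequence: I would show that $\U_M(t)=\Delta_M(t)\sqcup 0$ is a $\tau$-rigid object in $\mathcal{C}(A)$ (here $\Delta_M(t)=\f_{t+1}(M_t)=M_t$ since $\overline{M}_t=0$, so $\Delta_M(t)=M_t$ is just an indecomposable $\tau$-rigid module, and the summand $0\in\proj(A)$ trivially satisfies $\Hom_A(0,M_t)=0$), and then show that $(\U_M(1),\ldots,\U_M(t-1))$ is a signed $\tau$-exceptional sequence in $\mathcal{C}(J(\U_M(t)))=\mathcal{C}(J(M_t))$. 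The key technical point is to identify the images of $\Delta_M(1),\ldots,\Delta_M(t-1)$ under Jasso's reduction equivalence $F=\Hom_A(\B_{M_t},-)\colon J(M_t)\to\modu\,(C_{M_t})$ and to check that they form the $M'$-standard system for a suitable $\tau$-rigid module $M'$ over $C_{M_t}$, where $M'$ is the image of $\f(M_1\oplus\cdots\oplus M_{t-1})$ under $F$ (using Proposition \ref{prop:redtaurigid}). Crucially, I would need to check that the torsion-free truncations $\f_{i+1}$ appearing in the definition of $\Delta_M$ are compatible with the torsion-free truncation with respect to $\Fac(M_t)$ used in the reduction, so that $\Delta_M(i)$ for $i<t$ already lies in $J(M_t)$ and its image under $F$ is again given by the analogous torsion-free functor inside $\modu\,(C_{M_t})$; this uses that $\Delta_M(i)\in(\bigoplus_{k\ge i+1}M_k)^\perp\subseteq M_t^\perp$ together with $\tau$-rigidity giving $\Delta_M(i)\in{}^\perp(\tau M_t)$.

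\textbf{The reduction step in detail.} Concretely, I would proceed as follows. Write $N:=\bigoplus_{i=1}^{t-1}M_i$, so that $M=N\oplus M_t$ and, by TF-admissibility, $M_t\notin\Fac(N)$ is not needed but rather each $M_i\notin\Fac(\bigoplus_{j>i}M_j)$. Let $0\to\t(N)\to N\to\f(N)\to 0$ be the canonical sequence with respect to $(\Fac(M_t),M_t^\perp)$; by Proposition \ref{prop:redtaurigid}, $\f(N)$ is $\tau$-rigid in $J(M_t)$, and I claim $\f(N)=\bigoplus_{i=1}^{t-1}\f(M_i)$ with $\f(M_i)=\Delta_M(i)$ for the \emph{truncated} indices, once one checks that applying $\f_{i+1}$ (truncation by $\Fac(\bigoplus_{k\ge i+1}M_k)$) to $M_i$ coincides, after the $M_t$-truncation, with truncation by $\Fac(\bigoplus_{k\ge i+1}^{t-1}M_k)$ computed inside $J(M_t)$. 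The point is that the torsion classes $\Fac(\bigoplus_{k\ge i+1}M_k)$ for $i+1\le t$ all contain $\Fac(M_t)$, so the two-step truncation (first by the bigger class, then inside the subcategory) agrees with the one-step truncation — a standard fact about nested torsion pairs. Then I would transport everything through the equivalence $F\colon J(M_t)\xrightarrow{\sim}\modu\,(C_{M_t})$: $F(\f(N))$ is $\tau$-rigid in $\modu\,(C_{M_t})$, the decomposition $F(\f(N))=\bigoplus_{i=1}^{t-1}F(\Delta_M(i))$ is TF-admissible there (because $F$ is exact and sends $\Fac$ to $\Fac$, being an equivalence of abelian categories — or rather one argues via torsion classes corresponding under the equivalence), and $(F(\Delta_M(1)),\ldots,F(\Delta_M(t-1)))$ is exactly the $F(\f(N))$-standard system over $C_{M_t}$. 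By the induction hypothesis applied to the algebra $C_{M_t}$ (which has rank $<n$, or rather $t-1$ summands), the tuple $(F(\Delta_M(1))\sqcup 0,\ldots,F(\Delta_M(t-1))\sqcup 0)$ is a signed $\tau$-exceptional sequence in $\mathcal{C}(C_{M_t})\cong\mathcal{C}(J(M_t))$, which is precisely the condition needed to conclude that $\U_M$ is a signed $\tau$-exceptional sequence in $\mathcal{C}(A)$.

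\textbf{The converse.} For the "moreover" part, suppose $\U=(\U(1),\ldots,\U(t))$ is a signed $\tau$-exceptional sequence with $\U(i)=U(i)\sqcup 0$, all $U(i)\in\modu\,(A)$. By Theorem \ref{thm:bijection}, $\U$ corresponds to an ordered $\tau$-rigid object $(\mathcal{T}_1,\ldots,\mathcal{T}_t)$ in $\mathcal{C}(A)$; since all the shift-summands of the $\U(i)$ are zero, I would argue that all the $\mathcal{T}_i$ are genuine modules (no shifts), so $M:=\coprod_{i=1}^t\mathcal{T}_i$ is a $\tau$-rigid $A$-module with $t$ indecomposable summands $M_i:=\mathcal{T}_i$. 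The bijection of \cite[Theorem 5.4]{BM} is built precisely by the reduction procedure — the underlying ordered $\tau$-rigid object is reconstructed from the exceptional sequence by iterated "un-reduction" — so I would show that the order $M=\bigoplus M_i$ is automatically TF-admissible (the $i$-th reduction step being well-defined forces $M_i\notin\Fac(\bigoplus_{j>i}M_j)$, which is exactly what makes the torsion-free truncation nonzero and indecomposable, cf.\ \cite[Lemma 4.6]{BM} as used in the proof of Theorem \ref{thm:taustratsys}(a)). Then an inductive comparison, matching $U(i)$ with $\f_{i+1}(M_i)=\Delta_M(i)$ step by step through the reduction equivalences (using the first half's identification of how $\Delta_M$ behaves under reduction), shows $\U=\U_M$.

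\textbf{Main obstacle.} I expect the principal difficulty to be the bookkeeping in the reduction step: verifying that the torsion-free functors $\f_{i+1}$ used to define $\Delta_M$ are genuinely compatible with Jasso's reduction — i.e.\ that $\Delta_M(i)\in J(M_t)$ for $i<t$, that the decomposition stays TF-admissible inside $J(M_t)$, and that $F(\Delta_M(i))$ equals the torsion-free truncation of $F(M_i)$ computed intrinsically in $\modu\,(C_{M_t})$. This rests on the interaction between nested torsion pairs and the reduction equivalence, and on the fact (from Theorem \ref{prop:Bongartz} and the proof of Proposition \ref{prop:admissiblecompletion}) that $\Fac(M)\subseteq{}^\perp(\tau M_t)$ so that truncating by $\Fac(\bigoplus_{k\ge i+1}M_k)$ already lands inside ${}^\perp(\tau M_t)$; the orthogonality $\Delta_M(i)\in M_t^\perp$ is immediate from the definition. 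Once this compatibility is pinned down, both directions follow by a clean induction on $t$, with the base case $t=1$ being the trivial observation that a single indecomposable $\tau$-rigid module $\Delta_M(1)=M_1$ gives the one-term signed $\tau$-exceptional sequence $(M_1\sqcup 0)$.
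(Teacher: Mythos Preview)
Your proposal is correct and follows essentially the same inductive strategy as the paper: peel off $\Delta_M(t)=M_t$, reduce to $J(M_t)$, and show the remaining $\Delta_M(i)$ form the standard system of the reduced $\tau$-rigid module there, with the converse run via the Buan--Marsh bijection. The paper packages the ``nested torsion pair'' compatibility you identify as the main obstacle into two dedicated lemmas (Lemma~\ref{lem:redtors} and Lemma~\ref{lem:comptorsfreefunct}) and a corollary (Corollary~\ref{cor:pasoinductivo}) showing TF-admissibility survives reduction; it then works directly inside the wide subcategory $J(M_t)$ rather than transporting through the equivalence $F\colon J(M_t)\to\modu\,(C_{M_t})$ as you propose, which saves the step of checking that $F$ matches torsion pairs and $\tau$-rigidity on both sides. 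One small wording issue: you write ``$\f(M_i)=\Delta_M(i)$ for the truncated indices,'' but $\f_t(M_i)\neq\Delta_M(i)$ for $i<t-1$; what you need (and correctly state a line later) is that the \emph{further} truncation of $\f_t(M_i)$ inside $J(M_t)$ recovers $\Delta_M(i)$, which is exactly the content of Lemma~\ref{lem:comptorsfreefunct} combined with the nested-torsion-pair identity $\f_{i+1}(\f_t(M_i))=\f_{i+1}(M_i)$. For the converse, the paper invokes \cite[Remark~5.12]{BM} to see that unshifted $\U(i)$ force unshifted $\mathcal{T}_i$; you gesture at this but should cite it explicitly, since it is the one place where the explicit form of the Buan--Marsh bijection is used.
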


The main difference between the two constructions is that the signed $\tau$-exceptional sequences in $\C(A)$ are constructed recursively, while the construction of the stratifying system $\Delta_M$ is direct.
Therefore, the first step in order to show the compatibility between both constructions, we need to show that the perpendicular categories that we find are the same.

\begin{lemma}\label{lem:redtors}
Let $A$ be an algebra, $M\oplus M'\in\modu\,(A)$ be a basic $\tau$-rigid module and let $\f : \modu\,(A) \to \modu\,(A)$ be the torsion free functor associated to the torsion pair $(\Fac( M), M^\perp).$
Then, $\Fac (M\oplus M') \cap M^\perp  = \Fac (\f( M')) \subseteq J(M)$.
\end{lemma}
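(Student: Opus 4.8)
The plan is to prove the two claimed equalities/inclusions separately: first $\Fac(M\oplus M')\cap M^\perp = \Fac(\f(M'))$, and then $\Fac(\f(M'))\subseteq J(M)$. For the second inclusion I would argue as follows. Since $M\oplus M'$ is $\tau$-rigid, Proposition \ref{prop:redtaurigid} tells us that $\f(M')$ is a $\tau$-rigid object in $J(M)=M^\perp\cap{}^\perp(\tau M)$. Because $J(M)$ is a torsion class in $\modu\,(A)$ — recall ${}^\perp(\tau M)$ is a torsion class by Theorem \ref{prop:Bongartz}, and $M^\perp$ is a torsion-free class, but the Jasso subcategory $J(M)$ is in fact a wide subcategory closed under quotients inside ${}^\perp(\tau M)$; more directly, $J(M)$ equals $\Fac(\B_M)\cap M^\perp$ which is closed under quotients in the appropriate sense — any quotient of a power $\f(M')^n$ that lies in $M^\perp$ again lies in $J(M)$. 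The cleanest route: $\f(M')\in J(M)$ and $J(M)$ is closed under taking quotients that still lie in $M^\perp$; but every object of $\Fac(\f(M'))$ is automatically in ${}^\perp(\tau M)$ since ${}^\perp(\tau M)\supseteq\Fac(M\oplus M')$ is a torsion class (hence closed under quotients) and $\f(M')\in{}^\perp(\tau M)$, so $\Fac(\f(M'))\subseteq{}^\perp(\tau M)$. Once we have the first equality, $\Fac(\f(M'))=\Fac(M\oplus M')\cap M^\perp\subseteq M^\perp$, so $\Fac(\f(M'))\subseteq M^\perp\cap{}^\perp(\tau M)=J(M)$.

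For the equality $\Fac(M\oplus M')\cap M^\perp=\Fac(\f(M'))$, I would prove both inclusions. The inclusion $\supseteq$: the canonical sequence $0\to\t(M')\to M'\to\f(M')\to 0$ exhibits $\f(M')$ as a quotient of $M'$, hence $\f(M')\in\Fac(M\oplus M')$; and $\f(M')\in M^\perp$ by definition of the torsion-free functor for the pair $(\Fac(M),M^\perp)$. Since $\Fac(M\oplus M')$ is closed under quotients and under finite direct sums, and $M^\perp$ is closed under quotients that remain... — more carefully, $M^\perp=\Fac(M)^\perp$ is a torsion-free class, hence closed under submodules and extensions but not quotients in general; so I instead argue that every $X\in\Fac(\f(M'))$ is a quotient of $\f(M')^n$, hence lies in $\Fac(M\oplus M')$, and I must separately check $X\in M^\perp$. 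To get $X\in M^\perp$: a surjection $\f(M')^n\twoheadrightarrow X$ together with $\Hom_A(M,\f(M')^n)=0$ does not immediately give $\Hom_A(M,X)=0$. The right tool is that $(\Fac(M),M^\perp)$ is a torsion pair and $X\in\Fac(M\oplus M')\subseteq{}^\perp(\tau M)$, so consider the canonical sequence $0\to\t(X)\to X\to\f(X)\to 0$ with $\t(X)\in\Fac(M)$; it suffices to show $\t(X)=0$. This is where I expect the main work: I would use that $\f(M')$ is Ext-projective or use the equivalence of Theorem \ref{thm:tautiltred} to transport the statement to $\modu\,(C_M)$, where $\Fac(\f(M'))$ corresponds to $\Fac$ of a module and the condition "$\in M^\perp$" becomes automatic.

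The inclusion $\subseteq$: take $X\in\Fac(M\oplus M')\cap M^\perp$, so there is a surjection $(M\oplus M')^n\twoheadrightarrow X$. Composing with the projection and using $X\in M^\perp$, the component from $M^n$ must have image inside... no — rather, since $X\in M^\perp$ and the restriction of the surjection to $M^n$ factors as $M^n\to X$, this map is zero, so already $M'^n\twoheadrightarrow X$ is surjective. Now apply the torsion-free functor $\f$: since $X\in M^\perp$ we have $\f(X)=X$, and $\f$ is right exact (being naturally isomorphic to $1/\t$ with $\t$ a subfunctor of the identity, $\f$ preserves epimorphisms), so $\f(M')^n=\f(M'^n)\twoheadrightarrow\f(X)=X$, giving $X\in\Fac(\f(M'))$. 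I expect this direction to be routine; the genuine obstacle is the membership-in-$M^\perp$ half of the $\supseteq$ direction, which I would settle either via the Jasso equivalence (Theorem \ref{thm:tautiltred}) or by a direct argument showing $\Fac(\f(M'))\cap\Fac(M)=0$, equivalently that $\f(M')$ has no nonzero quotient in $\Fac(M)$, which follows from $\f(M')\in M^\perp$ and the fact that $\Fac(M)$ is generated by $M$ together with $\Hom$-orthogonality.
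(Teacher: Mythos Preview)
For the inclusion $\Fac(M\oplus M')\cap M^\perp\subseteq\Fac(\f(M'))$ your argument is correct and is essentially the paper's, just phrased functorially: the paper takes $N\in\Fac(M\oplus M')\cap M^\perp$, observes that the component $M^n\to N$ vanishes so that already $M''\twoheadrightarrow N$ with $M''\in\add(M')$, and then applies $\Hom_A(-,N)$ to the canonical sequence $0\to\t(M'')\to M''\to\f(M'')\to 0$ to see that the epimorphism factors through $\f(M'')$. Your version, using that $\f$ preserves epimorphisms and that $\f(N)=N$, is an equivalent and arguably cleaner repackaging.

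For the reverse inclusion you correctly isolate the real obstacle, namely showing that every quotient of $\f(M')^n$ lies in $M^\perp$; but your proposed fixes do not close it, and in fact this step \emph{fails} if $\Fac(\f(M'))$ is read in $\modu(A)$. Take $A=k(1\to 2)$, $M=S_1$, $M'=P_1$: then $M\oplus M'$ is basic $\tau$-rigid, $\t(P_1)=0$ so $\f(M')=P_1$, and $S_1\in\Fac(P_1)$ while $S_1\notin S_1^\perp$. Thus neither a transport along the Jasso equivalence nor the claim ``$\f(M')$ has no nonzero quotient in $\Fac(M)$'' can rescue the argument as you set it up. The paper's route is different: it invokes \cite[Theorem~3.12]{Jasso2015} to say that $\Fac(M\oplus M')\cap M^\perp$ is a torsion class \emph{in $J(M)$}, hence closed under $J(M)$-quotients, which gives $\Fac_{J(M)}(\f(M'))\subseteq\Fac(M\oplus M')\cap M^\perp$ immediately. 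In other words, the symbol $\Fac(\f(M'))$ in this lemma is to be read inside $J(M)$ --- as Lemma~\ref{lem:comptorsfreefunct} then makes explicit when it speaks of the torsion pair $(\Fac(\f_M(M')),\f_M(M')^\perp)$ in $J(M)$. With that reading your first inclusion still goes through unchanged (the epimorphism you build has source and target in $J(M)$, and $J(M)$ is wide), while the second is a one-line consequence of Jasso's theorem; no direct argument of the kind you sketch is needed, and the counterexample above shows none is available in $\modu(A)$.
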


\begin{proof} 
By \cite[Proposition 3.15]{Jasso2015} it follows that $\f (M\oplus M')$ is Ext-projective in $\Fac (M\oplus M') \cap M^\perp$.
In particular $\f(M') \in \Fac (M \oplus M')\cap M^\perp$ and thus $\Fac (\f (M')) \subseteq \Fac (M \oplus M')\cap M^\perp$ because is a torsion class in $J(M)$ by \cite[Theorem 3.12]{Jasso2015}.
\

Take $N \in \Fac(M \oplus M') \cap M^{\perp}$. Since $\Fac(M \oplus M') \cap M^{\perp} \subseteq \Fac(M'),$ we have that $N$ is in $\Fac(M')$.
Then there exists an epimorhism $p: M'' \to N$ with $M''$ in $\add( M').$
Now, consider the canonical short exact sequence 
$$0 \to \t(M'') \to M'' \to \f(M'') \to 0$$
of $M''$ with respect to the torsion pair $(\Fac(M), M^{\perp})$.
By applying the functor $\Hom(-,N)$ to the previous short exact sequence, we obtain the exact sequence
$$0 \to \Hom_A(\f (M''), N) \to \Hom_A(M'', N) \to \Hom_A(\t (M''), N).$$
Now, since $N \in \Fac(M \oplus M') \cap M^{\perp}$ we have that $N \in M^\perp$.
Thus $\Hom_A(\t (M''), N)=0$.
Hence every map from $f\in \Hom_A(M'',N)$ factors through a map $f' \in \Hom_A(\f(M''), N).$
In particular, the epimorphism $p: M'' \to N$ factors through a map $p' : \f(M'') \to N$ which is necessarily an epimorphism.
Therefore $N \in \Fac(\f(M'))$ and the result follows.
\end{proof}

\begin{corollary}\label{cor:pasoinductivo}
Let $A$ be an algebra, $M\in\modu\,(A)$ be a non-zero basic $\tau$-rigid $A$-module, $M=\bigoplus_{i=1}^{t} M_i$ be a TF-admissible decomposition of $M,$ and let $\f_t:\modu\,(A)\to\modu\,(A)$ be the torsion free functor associated to the torsion pair $(\Fac(M_t) , M_t^\perp)$.  
Then $\f_t(M)$ is a $\tau$-rigid object in $J(M_t)$ and $ \f_t (M)=\bigoplus_{i=1}^{t-1} \f_t (M_i)$ is a TF-admissible decomposition of $\f_t(M)$.
\end{corollary}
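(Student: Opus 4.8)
The plan is to prove Corollary~\ref{cor:pasoinductivo} by combining Lemma~\ref{lem:redtors} with Proposition~\ref{prop:redtaurigid} and the definition of TF-admissibility, peeling off the last summand $M_t$ and identifying the reduced module $\f_t(M)$ inside the Jasso subcategory $J(M_t)$. First I would write $M = M_t \oplus \overline{M}_t$ with $\overline{M}_t := \bigoplus_{i=1}^{t-1} M_i$, so that $M_t \oplus \overline{M}_t$ is $\tau$-rigid. Applying Proposition~\ref{prop:redtaurigid} with the roles ``$M$''$=M_t$ and ``$M'$''$=\overline{M}_t$ immediately gives that $\f_t(\overline{M}_t)$ is a $\tau$-rigid object in $J(M_t)$. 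Since $M_t \in \Fac(M_t)$ we have $\f_t(M_t) = 0$, and because $\f_t$ is an additive functor (it is naturally isomorphic to $1_{\modu(A)}/\t_t$), we get $\f_t(M) = \f_t(M_t) \oplus \f_t(\overline{M}_t) = \f_t(\overline{M}_t) = \bigoplus_{i=1}^{t-1}\f_t(M_i)$. This already yields the $\tau$-rigidity claim and the decomposition; it remains to check the decomposition is TF-admissible in $J(M_t)$.

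Next I would verify TF-admissibility of $\f_t(M) = \bigoplus_{i=1}^{t-1} \f_t(M_i)$ as a module in the wide subcategory $J(M_t)$, i.e.\ that for each $i \in [1,t-1]$ one has $\f_t(M_i) \notin \Fac_{J(M_t)}\big(\bigoplus_{j>i,\, j\le t-1} \f_t(M_j)\big)$, where $\Fac_{J(M_t)}$ denotes factor objects inside $J(M_t)$. The key identification is Lemma~\ref{lem:redtors}: taking $M = M_t$ and $M' = \bigoplus_{j=i+1}^{t-1} M_j$ there (so $M_t \oplus M'$ is $\tau$-rigid as a summand of the original module), the lemma gives
$$\Fac\Big(M_t \oplus \bigoplus_{j=i+1}^{t-1} M_j\Big) \cap M_t^\perp \;=\; \Fac\Big(\f_t\big(\textstyle\bigoplus_{j=i+1}^{t-1} M_j\big)\Big) \;=\; \Fac\Big(\textstyle\bigoplus_{j=i+1}^{t-1}\f_t(M_j)\Big),$$
and this is exactly $\Fac_{J(M_t)}\big(\bigoplus_{j=i+1}^{t-1}\f_t(M_j)\big)$, the Fac taken internally in $J(M_t)$ (the torsion class inside $J(M_t)$ generated by those objects, by \cite[Theorem~3.12]{Jasso2015}). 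So I must show $\f_t(M_i) \notin \Fac\big(M_t \oplus \bigoplus_{j=i+1}^{t-1} M_j\big) \cap M_t^\perp$. Suppose it were. Then $\f_t(M_i)$ is a quotient of some module in $\add\big(M_t \oplus \bigoplus_{j>i} M_j\big)$ (note $\bigoplus_{j>i} M_j = M_t \oplus \bigoplus_{j=i+1}^{t-1}M_j = \overline{M}_i$ in the original indexing). Combining the canonical surjection $M_i \twoheadrightarrow \f_t(M_i)$ with the fact that $M_i \twoheadrightarrow \f_t(M_i)$ has kernel $\t_t(M_i) \in \Fac(M_t)$, I would want to conclude $M_i \in \Fac(\overline{M}_i)$, contradicting TF-admissibility of the original decomposition $M = \bigoplus_{i=1}^t M_i$. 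Concretely: from $M_i \twoheadrightarrow \f_t(M_i)$ with $\f_t(M_i) \in \Fac(\overline{M}_i)$ and $\ker \in \Fac(M_t) \subseteq \Fac(\overline{M}_i)$, and since $\Fac(\overline{M}_i)$ is closed under extensions (it is a torsion class by Theorem~\ref{thm:fftors}, as $\overline{M}_i$ is $\tau$-rigid), one gets $M_i \in \Fac(\overline{M}_i)$ — the desired contradiction.

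The step I expect to be the main obstacle is the last one: correctly matching ``Fac inside $J(M_t)$'' with the ordinary $\Fac(-) \cap M_t^\perp$ computed in $\modu(A)$, and pushing a membership $\f_t(M_i) \in \Fac(\text{stuff})$ back up through the surjection $M_i \twoheadrightarrow \f_t(M_i)$ to a membership $M_i \in \Fac(\overline{M}_i)$ using only that $\ker = \t_t(M_i) \in \Fac(M_t)$ and extension-closure of torsion classes. One must be a little careful that $\Fac(M_t) \subseteq \Fac(\overline{M}_i)$ (clear, since $M_t$ is a summand of $\overline{M}_i$) and that the relevant submodule $\overline{M}_t$-type objects really are $\tau$-rigid (they are, being summands of the $\tau$-rigid $M$). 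Everything else is routine bookkeeping with the additivity of $\f_t$, the vanishing $\f_t(M_t)=0$, and the citations to Jasso's reduction.
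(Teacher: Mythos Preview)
Your approach matches the paper's: both invoke Proposition~\ref{prop:redtaurigid} for $\tau$-rigidity in $J(M_t)$ and Lemma~\ref{lem:redtors} for TF-admissibility, and your extension-closure contradiction (if $\f_t(M_i)\in\Fac(\overline{M}_i)$ and $\t_t(M_i)\in\Fac(M_t)\subseteq\Fac(\overline{M}_i)$ then $M_i\in\Fac(\overline{M}_i)$, since $\Fac(\overline{M}_i)$ is a torsion class) is precisely what the paper's one-line appeal to Lemma~\ref{lem:redtors} amounts to when unpacked.

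There is, however, a small genuine gap. By Definition~\ref{def:admorder} a TF-admissible decomposition is a decomposition of a \emph{basic} module into \emph{indecomposable} summands, and you never verify that each $\f_t(M_i)$ is indecomposable or that the $\f_t(M_i)$ are pairwise non-isomorphic. Neither fact is automatic: the torsion-free quotient of an indecomposable module need not be indecomposable in general, and additivity of $\f_t$ alone does not rule out collisions $\f_t(M_i)\cong\f_t(M_j)$. The paper handles this by citing \cite[Lemma~4.6]{BM} for indecomposability (which requires $M_i\notin\Fac(M_t)$, a consequence of the original TF-admissibility since $\Fac(M_t)\subseteq\Fac(\overline{M}_i)$ for $i<t$) and \cite[Lemma~4.7]{BM} for pairwise non-isomorphism. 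Once you insert these two citations, your argument is complete and coincides with the paper's.
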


\begin{proof}
The fact that $\f_t(M)$ is a $\tau$-rigid object  in $J(M_t)$ follows from Proposition \ref{prop:redtaurigid}.
Similarly, by Proposition \ref{prop:redtaurigid}, we get that $\f_t (M_i)$ is a $\tau$-rigid object in $J(M_t),$ for all $1 \leq i \leq t-1.$ 
Moreover, for every $1\leq i \leq t-1$, the object $\f_t(M_i)$ is indecomposable by \cite[Lemma 4.6]{BM}.
Note that these objects are pairwise non-isomorphic by \cite[Lemma 4.7]{BM}.
Therefore we have that 
$$ \f_t (M)=  \bigoplus_{i=1}^{t-1} \f_t (M_i)$$ 
is a decomposition of $\f_t(M)$ since $\f_t (M_t) = 0.$
Finally, the fact that the previous decomposition is TF-admissible follows from Lemma \ref{lem:redtors}.
\end{proof}

\begin{lemma}\label{lem:comptorsfreefunct}
Let $A$ be an algebra, $M\oplus M'\in\modu\,(A)$ be a $\tau$-rigid module and let $\f_M$ be the torsion free functor associated to the torsion pair $(\Fac(M), M^\perp)$.
Then, the canonical short exact sequences of $X$ with respect to the torsion pairs $(\Fac(M\oplus M'), (M\oplus M')^\perp)$ in $\modu(A)$ and $(\Fac(\f_M(M')), \f_M(M')^{\perp})$ in $J(M)$ are isomorphic for all $X\in J(M)$.
\end{lemma}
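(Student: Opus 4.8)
Our plan is to show that the canonical short exact sequence of $X$ with respect to the big torsion pair $(\Fac(M\oplus M'),(M\oplus M')^\perp)$ in $\modu\,(A)$ already has all of its terms in $J(M)$ and, read inside $J(M)$, satisfies the defining property of the canonical sequence for the reduced torsion pair $(\Fac(\f_M(M')),\f_M(M')^\perp)$; the asserted isomorphism then follows from uniqueness of canonical short exact sequences with respect to a torsion pair.

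First I would fix $X\in J(M)$ and take its canonical short exact sequence
$$0\to \t_{M\oplus M'}(X)\to X\to \f_{M\oplus M'}(X)\to 0$$
for $(\Fac(M\oplus M'),(M\oplus M')^\perp)$, so that $\t_{M\oplus M'}(X)\in\Fac(M\oplus M')$ and $\f_{M\oplus M'}(X)\in(M\oplus M')^\perp$. Since $X\in J(M)\subseteq M^\perp$ and $M^\perp$ is a torsion-free class, hence closed under submodules, the submodule $\t_{M\oplus M'}(X)$ of $X$ lies in $M^\perp$; therefore $\t_{M\oplus M'}(X)\in\Fac(M\oplus M')\cap M^\perp=\Fac(\f_M(M'))$ by Lemma~\ref{lem:redtors}, and in particular $\t_{M\oplus M'}(X)\in J(M)$. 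Because $J(M)$ is a wide subcategory of $\modu\,(A)$ (Remark~\ref{RkJasso}, Theorem~\ref{thm:tautiltred}), it is closed under cokernels of its own morphisms, so $\f_{M\oplus M'}(X)=X/\t_{M\oplus M'}(X)$ also lies in $J(M)$, and the displayed sequence is short exact in the abelian category $J(M)$.

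Next I would check that $\f_{M\oplus M'}(X)$ belongs to the torsion-free class $\f_M(M')^\perp$ of the pair $(\Fac(\f_M(M')),\f_M(M')^\perp)$ inside $J(M)$, which — as $\f_M(M')\in J(M)$ is $\tau$-rigid there by Proposition~\ref{prop:redtaurigid} — is $\{Y\in J(M):\Hom_A(\f_M(M'),Y)=0\}$. From $\f_{M\oplus M'}(X)\in(M\oplus M')^\perp\subseteq (M')^\perp$ we get $\Hom_A(M',\f_{M\oplus M'}(X))=0$, and applying $\Hom_A(-,\f_{M\oplus M'}(X))$ to the canonical sequence $0\to\t(M')\to M'\to\f_M(M')\to 0$ of $M'$ for $(\Fac(M),M^\perp)$ yields a monomorphism $\Hom_A(\f_M(M'),\f_{M\oplus M'}(X))\hookrightarrow\Hom_A(M',\f_{M\oplus M'}(X))=0$. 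Thus the displayed sequence, regarded in $J(M)$, has torsion part in $\Fac(\f_M(M'))$ and torsion-free part in $\f_M(M')^\perp$, so by uniqueness it coincides, up to a canonical isomorphism of short exact sequences, with the canonical sequence of $X$ for $(\Fac(\f_M(M')),\f_M(M')^\perp)$ in $J(M)$, which is the claim.

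I do not expect a genuinely hard step: the real content is already packaged in Lemma~\ref{lem:redtors}, and the rest is bookkeeping. The only points requiring a little care are the closure properties invoked — $M^\perp$ closed under submodules and $J(M)$ closed under the relevant cokernel — together with the correct identification of the torsion-free class of the reduced torsion pair, all of which are standard consequences of the torsion-pair formalism and of Jasso's reduction (Theorem~\ref{thm:tautiltred}).
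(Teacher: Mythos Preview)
Your proof is correct and follows essentially the same approach as the paper: both rest on Lemma~\ref{lem:redtors} together with the observation that $\t_{M\oplus M'}(X)\in M^\perp$ whenever $X\in J(M)$. The only difference is organisational --- you verify directly that the big canonical sequence has its left term in $\Fac(\f_M(M'))$ and its right term in $\f_M(M')^\perp$ and then invoke uniqueness, whereas the paper instead shows mutual containment of the two torsion submodules of $X$.
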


\begin{proof}
Let $X\in J(M)$ and let $\t_{M\oplus M'}(X)$ and $\tilde{\t}_{\f(M)}(X)$ be the torsion submodules of $X$ with respect to the torsion pairs 
$(\Fac(M\oplus M'), (M\oplus M')^\perp)$ in $\mod (A)$ and $(\Fac(\f_M(M')), \f_M(M')^{\perp})$ in $J(M)$, respectively.
Note that it is enough to show that $\t_{M\oplus M'}(X)$ is isomorphic to $\tilde{\t}_{\f(M)}(X)$.

By Lemma~\ref{lem:redtors} we have that $\Fac (\f( M')) = \Fac (M\oplus M') \cap M^\perp$. 
Thus $\tilde{\t}_{\f(M)}(X) \in \Fac (M\oplus M') \cap M^\perp$.
In particular, $\tilde{\t}_{\f(M)}(X) \in \Fac (M\oplus M')$. 
Then there exist a monomorphism $f : \tilde{\t}_{\f(M)}(X) \to \t_{M\oplus M'}(X)$ because $t_{M\oplus M'}(X)$ contains all submodules of $X$ that are isomorphic to a module in $\Fac(M \oplus M')$.

On the other hand, we have that $X \in J(M) = M^\perp \cap {}^\perp\tau M$.
Then $X$ is in the torsion free class $M^\perp$.
In particular, this implies that $\t_{M\oplus M'}(X) \in M^\perp$ since every torsion free class is closed under submodules.
Also, we have that $\t_{M\oplus M'}(X) \in \Fac(M\oplus M')$ by definition. 
Then $\t_{M\oplus M'}(X) \in \Fac(M\oplus M') \cap  M^\perp = \Fac (\f( M'))$ by Lemma~\ref{lem:redtors}.
Hence, there exists a monomorphism $f' : \t_{M\oplus M'}(X) \to \tilde{\t}_{\f(M)}(X)$ by the properties of the torsion functor. 
Thus $\t_{M\oplus M'}(X) \cong \tilde{\t}_{\f(M)}(X)$ for all $X \in J(M)$.
\end{proof}

\begin{proof}[Proof of Theorem \ref{thm:ss-are-taues}]
Let $M\in\modu\,(A)$ be a non-zero, basic and $\tau$-rigid $A$-module with a TF-admissible decomposition $M=\bigoplus_{i=1}^t M_i.$ 
We first show, by induction, that 
$$\U_M:=(\U_M(1),\ldots,\U_M(t)),$$ 
where $\U_M(i):=\Delta_M(i)\sqcup 0$ $\forall\,i\in[1,t],$ is a signed $\tau$-exceptional sequence. 

First, by construction of $\Delta_M$, we have that $\Delta_M(t)=M_t$.
Then $\U_t:=\Delta_M(t)\sqcup0$ is a support $\tau$-rigid object in $\C(A)$ because $\Delta_M(t)$ is a $\tau$-rigid indecomposable $A$-module. 

Now, Corollary \ref{cor:pasoinductivo} implies that 
$$\f_t (M )=\bigoplus_{i=1}^{t-1} \f_t (M_i)$$ 
is a TF-admissible decomposition of $\f_t (M)$, where $\f_t$ is the torsion free functor associated to the torsion pair $\left(\Fac (M_t), M_t^\perp \right)$. 
Then we have that $\Delta_M(t-1)=f_t(M_{t-1})$ is an indecomposable $\tau$-rigid module in $J(M_t)$ by \cite[Lemma 4.6]{BM} and \cite[Lemma 4.7]{BM}, implying that
$\U_{t-1}:= \Delta_M(t-1) \sqcup 0$ is an indecomposable support $\tau$-rigid in $\C(\U_t)$.
Hence $\{\U_{t-1}, \U_t\}$ is a signed $\tau$-exceptional sequence of length two.

Let $\tilde{\f}_{t-1}$ be the torsion free functor associated to the torsion pair induced by $\f_t (M_{t-1})$ within $J(\U_t)= M_t^\perp \cap \; ^\perp(\tau M_t)$ and $\f_{t-1}$ be the torsion free functor associated to the torsion pair 
$$\left(\Fac (\oplus_{i=t-1}^t M_i) ,  (\oplus_{i=t-1}^t M_i)^\perp \right)$$ 
generated by $M_{t-1}\oplus M_t$. 
By Lemma~\ref{lem:comptorsfreefunct} we have that $\tilde{\f}_{t-1}(\f_t (M_{i})) \cong \f_{t-1}(M_{i})$ for all $1 \leq i \leq t-2$.
Thus, again by \cite[Lemma 4.6]{BM} and \cite[Lemma 4.7]{BM},  we have that $\Delta_M(t-2) = f_{t-1}(M_{t-2})$ is an indecomposable $\tau$-rigid object in $J(\U_{t-1})$, implying that
$$\left\{ \U_{t-2}:= \Delta_M(t-2) \sqcup 0 \;, \; \U_{t-1} := \Delta_M(t-1)\sqcup 0 \; ,\; \U_t:= \Delta_M(t) \sqcup 0 \right\}$$
is a signed $\tau$-exceptional sequence of length three. 

Now we do the inductive step.
Suppose that
$$\left\{ \U_{k}:= \Delta_M(k) \sqcup 0 \;, \dots \; , \; \U_{t-1} := \Delta_M(t-1)\sqcup 0 \; ,\; \U_t:= \Delta_M(t) \sqcup 0 \right\}$$
is a signed $\tau$-exceptional sequence of length $k$.
Once again Lemma~\ref{lem:comptorsfreefunct} implies that 
$\tilde{\f}_{k}(\f_{k-1} (M_{i})) \cong \f_{k}(M_{i})$
for all $1 \leq i \leq k-1$, where $\tilde{\f}_{k}$ is the torsion free functor associated to the torsion pair induced by $\f_{k-1}(M_{k})$ within $J(\U_{k-1}).$
Then we can apply one more time \cite[Lemma 4.6]{BM} and \cite[Lemma 4.7]{BM} to conclude that 
$$\left\{ \U_{k+1}:= \Delta_M(k+1) \sqcup 0 \;, \;\U_{k}:= \Delta_M(k) \sqcup 0 \;,\; \dots \; ,\; \U_t:= \Delta_M(t) \sqcup 0 \right\}$$
is a signed $\tau$-exceptional sequence of length $k+1$.
\medskip

For the moreover part, let 
$$\left\{ \U_i = N_i \sqcup 0 : N_i \in \mod\,(A)  \text{ for all $1\leq i\leq t$}\right\}$$
be a signed $\tau$-exceptional sequence as in the statement. 
Then, by Theorem~\ref{thm:bijection} there exists an ordered $\tau$-rigid object $\mathcal{M}= \bigoplus_{i=1}^t \mathcal{M}_i$ in $\C_A$ inducing this signed $\tau$-exceptional sequence.
By hypothesis, $\U_i = N_i \sqcup 0$ where $N_i \in \mod\,(A)$ for all $1\leq i\leq t$.
Hence \cite[Remark~5.12]{BM} implies that 
$$N_i = \tilde{\f}_t(\tilde{\f}_{t-1}(\dots (\tilde{\f}_{i+1}(\mathcal{M}_i)))$$
for all $1 \leq i \leq t$, where $\tilde{\f}_k$ is the torsion free functor associated to the torsion pair induced by $\U_k$ in $J(\U_{k+1})$.
This implies that $\mathcal{M}_i=M_i\sqcup 0$ where $M_i \in \modu\,(A)$ for all $1 \leq i \leq t$.
Moreover, by Lemma~\ref{lem:comptorsfreefunct} we have that $N_i \cong \f_{i+1}(M_i)$ for all $1\leq i \leq t$, where $\f_{i+i}$ is the torsion free functor associated to the torsion pair $\left(\Fac (\oplus_{k=i+1}^t M_k) ,  (\oplus_{k=i+1}^t M_k)^\perp \right)$.
Since $N_i$ is non-zero for all $1 \leq i \leq t$, we have that $M_i \not \in \Fac\left(\bigoplus_{j>i} M_j\right)$.
Then $M=\bigoplus_{i=1}^t M_i$ is a $\tau$-rigid module in $\mod\,(A)$ and this is a TF-admissible decomposition of $M$. 
Hence Theorem~\ref{thm:taustratsys} implies that $\Delta_M = \{\Delta_M(i) = \f_{i+1}(M_i) : 1 \leq i \leq n\}$ is a stratifying system and $N_i \cong \Delta_M(i)$ for all $i.$
\end{proof}


\section{Example}\label{sec:example}

We finish the paper by studying the number of stratifying systems induced by the $\tau$-tilting modules of an algebra. 

\begin{example}
Let $A$ be the quotient path $k$-algebra given by the quiver 
$$\xymatrix{
  & 2\ar[dr]& \\
  1\ar[ru] & & 3\ar[ll] }$$
and the third power of the ideal generated by all the arrows.
The Auslander-Reiten quiver of $A$ can be seen in Figure \ref{fig:Ar-quiverA}.
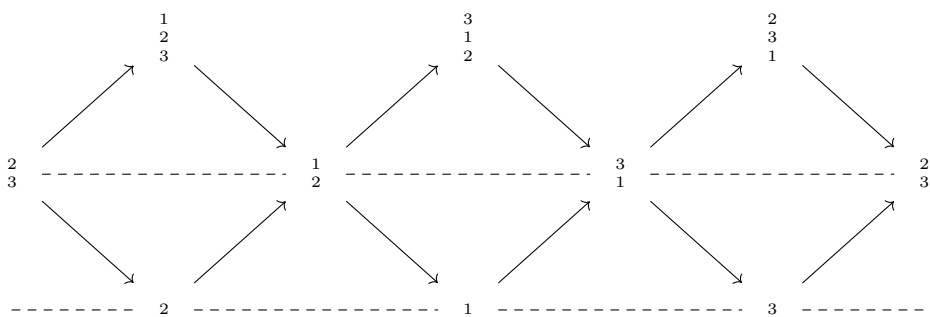
\begin{figure}[h]
    \centering
			\begin{tikzpicture}[line cap=round,line join=round ,x=2.0cm,y=1.8cm]
				\clip(-2.2,-0.1) rectangle (4.1,2.5);
					\draw [->] (-0.8,0.2) -- (-0.2,0.8);
					\draw [->] (1.2,0.2) -- (1.8,0.8);
					\draw [->] (3.2,0.2) -- (3.8,0.8);
					\draw [->] (-1.8,1.2) -- (-1.2,1.8);
					\draw [->] (0.2,1.2) -- (0.8,1.8);
					\draw [->] (2.2,1.2) -- (2.8,1.8);
					\draw [dashed] (-0.8,0.0) -- (0.8,0.0);
					\draw [dashed] (1.2,0.0) -- (2.8,0.0);
					\draw [dashed] (-1.8,1.0) -- (-0.2,1.0);
					\draw [dashed] (0.2,1.0) -- (1.8,1.0);
					\draw [dashed] (2.2,1.0) -- (3.8,1.0);
					\draw [dashed] (-2.0,0.0) -- (-1.2,0.0);
					\draw [dashed] (3.2,0.0) -- (4.0,0.0);
					\draw [->] (0.2,0.8) -- (0.8,0.2);
					\draw [->] (2.2,0.8) -- (2.8,0.2);
					\draw [->] (-1.8,0.8) -- (-1.2,0.2);
					\draw [->] (-0.8,1.8) -- (-0.2,1.2);
					\draw [->] (1.2,1.8) -- (1.8,1.2);
					\draw [->] (3.2,1.8) -- (3.8,1.2);
				
				\begin{scriptsize}
					\draw[color=black] (-1,0) node {$\rep{2}$};
					\draw[color=black] (1,0) node {$\rep{1}$};
					\draw[color=black] (3,0) node {$\rep{3}$};
					\draw[color=black] (-2,1) node {$\rep{2\\3}$};
					\draw[color=black] (0,1) node {$\rep{1\\2}$};
					\draw[color=black] (2,1) node {$\rep{3\\1}$};
					\draw[color=black] (4,1) node {$\rep{2\\3}$};
					\draw[color=black] (-1,2) node {$\rep{1\\2\\3}$};
					\draw[color=black] (1,2) node {$\rep{3\\1\\2}$};
					\draw[color=black] (3,2) node {$\rep{2\\3\\1}$};
				\end{scriptsize}
			\end{tikzpicture}
\caption{The Auslander-Reiten quiver of $A$}
    \label{fig:Ar-quiverA}
\end{figure}
Note that every module is represented by its Loewy series and both copies of $\rep{2\\3}$ should be identified, so the Auslander-Reiten quiver of $A$ has the shape of a cylinder. 
One can see that there are 10 $\tau$-tilting modules in $\modu\,(A)$.
In the table \ref{table:chambers},  we give a complete list of them and we indicate how many stratifying systems and $\tau$TF-admissible Ext-projective stratifying systems they induce, up to isomorphism. 
In particular, from the first line of the table, we get that this algebra is not standardly stratified under any linear order on the set $\{1,2,3\}.$  

\begin{table}[h]
\centering
\begin{tabular}{|c|c|c|}\hline
 $\tau$-tilting module & ind. strat. syst.  & $\tau$TFepss up to isom.\\\hline
 $\rep{1\\2\\3} \oplus \rep{2\\3\\1} \oplus \rep{3\\1\\2}$ & 6 & 0 \\\hline
 $\rep{1\\2\\3} \oplus \rep{2\\3\\1} \oplus \rep{2}$ & 2 & 0 \\\hline
 $\rep{1\\2\\3} \oplus \rep{3\\1\\2} \oplus \rep{1}$ & 2 & 0\\\hline
 $\rep{2\\3\\1} \oplus \rep{3\\1\\2} \oplus \rep{3}$ & 2 & 0 \\\hline
 $\rep{1\\2\\3} \oplus \rep{1\\2} \oplus \rep{2}$ & 3  & 3\\\hline
 $\rep{1\\2\\3} \oplus \rep{1\\2} \oplus \rep{1}$ & 1 & 1 \\\hline
 $\rep{2\\3\\1} \oplus \rep{2\\3} \oplus \rep{3}$ & 3 & 3\\\hline
 $\rep{2\\3\\1} \oplus \rep{2\\3} \oplus \rep{2}$ & 1 & 1\\\hline
 $\rep{3\\1\\2} \oplus \rep{3\\1} \oplus \rep{1}$ & 3 & 3\\\hline
 $\rep{3\\1\\2} \oplus \rep{3\\1} \oplus \rep{3}$ & 1 &1 \\\hline
 
\end{tabular}
\vspace{0.2cm}
\caption{The number of stratifying systems and $\tau$TF-admissible Ext-projective stratifying systems (up to isomorphism) induced by each $\tau$-tilting module in $\modu(A).$}
\label{table:chambers}
\end{table}

\end{example}

\footnotesize

\vskip3mm \noindent Octavio Mendoza Hern\'andez:\\ Instituto de Matem\'aticas, Universidad Nacional Aut\'onoma de M\'exico\\
Circuito Exterior, Ciudad Universitaria,
C.P. 04510, M\'exico, D.F. MEXICO.\\ {\tt omendoza@matem.unam.mx}

\vskip3mm \noindent Hipolito Treffinger:\\ Department of Mathematics,  University of Leicester\\
University Road (LE1 7RH)\\
Leicester, United Kingdom.\\ {\tt hjtc1@leicester.ac.uk}
 	
\end{document}